\def\C{\mathbb{C}}
\newtheorem{theorem}{Theorem}
\newtheorem{lemma}[theorem]{Lemma}
\newtheorem{proposition}[theorem]{Proposition}
\newtheorem{corollary}[theorem]{Corollary}
\numberwithin{equation}{section}
\newtheorem{definition}[theorem]{Definition}
\renewcommand{\l}{\ensuremath{\lambda}}
\newcommand{\Z}{\ensuremath{\mathbb{Z}}\xspace}
\newcommand{\N}{\ensuremath{\mathbb{N}}\xspace}
\newcommand{\supp}{\ensuremath{\operatorname{Supp}}\xspace}
\renewcommand{\phi}{\varphi}
\renewcommand{\leq}{\leqslant}
\renewcommand{\geq}{\geqslant}
\def\ms{\mathfrak{s}}
\def\mh{\mathfrak{h}}
\def\sl{\mathfrak{sl}}
\def\D{\Delta}
\def\l{\lambda}
\def\Hom{\text{Hom}}
\def\C{\Bbb C}
\def\Z{\Bbb Z}
\def\c{\dot c}
\def\z{\dot z}
\begin{document}
\title{BGG category for the quantum Schr{\"o}dinger algebra}
\author{Genqiang Liu, Yang Li}
\maketitle

\begin{abstract} In this paper, we study the BGG category $\mathcal{O}$ for the quantum Schr{\"o}dinger algebra $U_q(\mathfrak{s})$, where
$q$ is a nonzero complex number which is not a root of unity.
If  the central charge
$\dot z\neq 0$, using the module $B_{\dot z}$ over the quantum Weyl algebra $H_q$,
we show that there is an equivalence between the full subcategory $\mathcal{O}[\z]$  consisting of modules with the central charge $\dot z$
and the BGG category  $\mathcal{O}^{(\mathfrak{sl}_2)}$  for  the quantum group $U_q(\mathfrak{sl}_2)$.  In the case that
$\dot z=0$, we study the subcategory $\mathcal{A}$  consisting of  finite dimensional $U_q(\mathfrak{s})$-modules of type $1$ with
zero action of  $Z$.  Motivated by the ideas in \cite{DLMZ, Mak},
we directly construct an equivalent functor from $\mathcal{A}$  to the category of finite
dimensional representations of an infinite quiver with some quadratic
 relations.  As a corollary,
we show that   the category of finite dimensional $U_q(\mathfrak{s})$-modules is wild.
\end{abstract}

\vskip 10pt \noindent {\em Keywords:} BGG category, highest weight module, quiver, wild.
\section{Introduction}
In this paper, we denote by $\mathbb{Z}$, $\mathbb{Z}_+$, $\mathbb{N}$,
$\mathbb{C}$ and $\mathbb{C}^*$ the sets of  all integers, nonnegative integers,
positive integers, complex numbers, and nonzero complex numbers, respectively.
Let $q$ be a nonzero complex number which is not a root of unity. For $n\in \mathbb{Z}$, denote
$[n]=\frac{q^n-q^{-n}}{q-q^{-1}}$.

The BGG  category $\mathcal{O}$   for complex semisimple Lie algebras was
introduced by  Joseph Bernstein, Israel Gelfand and Sergei Gelfand in the early 1970s, see \cite{BGG},  which includes all highest weight modules
such as Verma modules and finite dimensional simple modules.
This category is influential in many areas of representation theory.  About the knowledge of  $\mathcal{O}$, one can see the
recent monograph \cite{Hu}  for details.

The Schr\"{o}dinger Lie algebra $\ms$ is the
 semidirect product of $\mathfrak{sl}_2$ and the three-dimensional Heisenberg Lie algebra.
 This algebra can describe symmetries of the free particle Schr\"{o}dinger equation,  see \cite{DDM1,Pe}. The representation theory of the Schr\"{o}dinger algebra  has been studied by  many authors.   A classification of the simple
highest weight representations of the Schr\"{o}dinger algebra were given in \cite{DDM1}.
All simple weight modules with  finite dimensional weight spaces were
classified in \cite{D}, see also \cite{LMZ}.    All simple weight modules of the Schr\"{o}dinger algebra were
classified in \cite{BL2,BL3}.  The BGG category
$\mathcal{O}$ of $\ms$  was studied in \cite{DLMZ}.

In 1996, in order to research the $q$-deformed heat equations,
a $q$-deformation of the universal enveloping algebra of the Schr\"{o}dinger Lie algebra was
introduced in \cite{DDM2}. This algebra is called the quantum Schr{\"o}dinger algebra.

 The quantum Schr{\"o}dinger algebra  $U_q(\ms)$ over $\C$  is generated by the elements
  $E$, $F$, $K$, $K^{-1}$, $X$, $Y$, $Z$ with defining relations:
\begin{equation}
\label{commrelations}
\begin{array}{ll}
[E,F]=\frac{K-K^{-1}}{q-q^{-1}}, & KXK^{-1} =qX,  \\
KEK^{-1}=q^2 E,  &KFK^{-1}=q^{-2}F, \\
 KYK^{-1} =q^{-1}Y,  & qYX-XY=Z, \\
EX = qXE, & EY = X+ q^{-1}YE, \\
FX= YK^{-1}+XF, &FY=YF,
\end{array}
\end{equation}
where $Z$ is central in $U_q(\ms)$. This definition is somewhat different from   that of \cite{DDM2} in form.
This algebra is a kind of  quantized symplectic oscillator algebra of rank one, see \cite{GK}.   The paper \cite{GK}  gave the PBW
Theorem and showed that the category  $\mathcal{O} $ is a highest weight category.
In the present  paper, we will give specific characterizations for each block of
 $\mathcal{O}$ for  $U_q(\ms)$ using some quivers.

The subalgebra  generated by $E,F,K,K^{-1}$ is the quantum group $U_q(\sl_2)$. The
 subalgebra generated by  $X,Y,Z$ is called the quantum Weyl algebra $H_q$, see \cite{B}.
All simple weight modules  over $U_q(\ms)$  with zero action of $Z$ were
classified in \cite{BL1}.

The paper is organized as follows. In Section 2 we recall some basic
facts about the category  $\mathcal{O}$ for  $U_q(\ms)$ .  For a $\z \in \C$, we denote by $\mathcal{O}[\z]$ the full subcategory of
$\mathcal{O}$ consisting of all modules which  are annihilated by some power of the maximal ideal
$\langle Z-\z\rangle$ of $\C[Z]$.   In Section 3, in case  of $\z\neq 0$,  using the modules $B_{\z}$ over the quantum Weyl algebra $H_q$, we show that the
functor $-\otimes \tilde{B}_{\z}$ gives an equivalence between
$\mathcal{O}^{(\mathfrak{sl}_2)}$ and $\mathcal{O}[\z]$ , where $\mathcal{O}^{(\mathfrak{sl}_2)}$ is the BGG
 category of $U_q(\mathfrak{sl}_2)$,  see Theorem \ref{prop721}.
 A  Weight $U_q(\ms)$-module $M$ is of type $1$ if the $\supp(M)\subset q^\Z$.  In Section 4, we study  the
category $\mathcal{A}$ of finite dimensional  $U_q(\ms)$-modules of type $1$ with zero action of $Z$.  It is shown that
there is an equivalence  between $\mathcal{A}$  and the category of finite dimensional representations of an infinite quiver with some quadratic
 relations, see Theorem \ref{thm13}.  In \cite{DLMZ}, the  grading technique was used in the study of finite dimensional modules for the
Schr\"{o}dinger Lie algebra.


\section{Basic properties of the category $\mathcal{O}$}

\subsection{ The definition of Category $\mathcal{O}$}

Let $U_q(\mathfrak{n}^+)$ be the subalgebra of $U_q(\ms)$ generated by the elements $E,X$
and let $U_q(\mathfrak{n}^-)$ be the subalgebra generated by $F,Y$.
Moreover let $U_q(\mh)$
be the subalgebra generated by the elements $K, K^{-1}, Z$. We write $\otimes$ for $\otimes_\C$.

 Then we have the following triangular  decomposition:
\begin{equation}\label{eq1}
U_q(\ms)=U_q(\mathfrak{n}^-)\otimes U_q(\mh)\otimes U_q(\mathfrak{n}^+).
\end{equation}

A $U_q(\ms)$-module $V$  is  called a weight module if $K$ acts diagonally on $V$, i.e.,
$$V=\oplus_{\lambda\in \mathbb{C^*}}V_\lambda,$$ where $V_\lambda=\{v\in V \mid K v=\lambda v\}$.
For a weight module $V$, let $$\mathrm{supp}(V)=\{\lambda\in\C^*|V_\lambda\neq0\}.$$

Next, we introduce the category $\mathcal{O}$  for
$U_q(\ms)$.
\begin{definition}
A left module $M$ over $U_q(\ms)$ is said to belong to category $\mathcal{O}$  if
\begin{enumerate}[(1)]
\item $M$ is finitely generated over $U_q(\ms)$;
\item $M$ is a weight module;
\item The action of $U_q(\mathfrak{n}^+)$ on $M$ is locally finite, i.e., $\dim U_q(\mathfrak{n}^+) v< \infty$ for any
$v\in M$.
\end{enumerate}
\end{definition}

For a weight module $V$,  a weight vector $v_\lambda\in V_\lambda$ is called a {\it highest weight vector} if $E v_\lambda= Xv_\lambda =0$.
A module  $M$ is
called a highest weight module of highest weight $\lambda$ if there exists a highest weight vector
$v_\lambda$ in $M $ which generates $M$.
For $\lambda\in\C^*$ and $\z\in\C$, let $\Delta(\lambda,\z)$ be the Verma module generated by $v_\lambda$,
where $Kv_\l=\lambda v_\l, Zv_\l=\z v_\l$. Then $\{Y^kF^lv_\l|k,l\in\Z_+\}$  is a basis of $\Delta(\lambda,\z)$.
Let $R(\lambda,\z)$ be the largest proper submodule of $\Delta(\lambda,\z)$. Hence $L(\lambda,\z)=\Delta(\lambda,\z)/R(\lambda,\z)$ is the unique simple
quotient module of $\Delta(\lambda,z)$.

\subsection{Basic properties of $\mathcal{O}$}
By the similar arguments as those in \cite{Hu}, we can see that every module $M$ in $\mathcal{O}$  has the following standard properties.

\begin{lemma} The category  $\mathcal{O}$ is closed with respect to taking submodules, quotient modules and finite direct sums. That is,
the category  $\mathcal{O}$ is an abelian  category.
\end{lemma}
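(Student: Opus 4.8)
The plan is to check the three defining conditions of $\mathcal{O}$ separately for finite direct sums, quotients, and submodules, and then deduce the abelian structure formally. Closure under finite direct sums is immediate: a finite direct sum of finitely generated modules is finitely generated, the weight spaces of a direct sum are the direct sums of the weight spaces, and a finite direct sum of locally finite $U_q(\mathfrak{n}^+)$-actions is locally finite. Closure under quotients $M\tto M/N$ is equally routine: any quotient of a finitely generated module is finitely generated; the images of the weight spaces $M_\lambda$ give the weight space decomposition of $M/N$; and the image of the finite-dimensional $U_q(\mathfrak{n}^+)$-module $U_q(\mathfrak{n}^+)\bar v$ is the finite-dimensional $U_q(\mathfrak{n}^+)$-module generated by a preimage, so local finiteness descends.

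The only point that is not purely formal is closure under submodules $N\subseteq M$ with $M\in\mathcal{O}$. Conditions (2) and (3) for $N$ are still easy: $N$ is $U_q(\mathfrak{n}^+)$-stable and sits inside $M$, so the $U_q(\mathfrak{n}^+)$-action on $N$ remains locally finite; and since $N$ is $K$-stable, a Vandermonde argument — given $v\in N$, expand $v=\sum_i v_{\lambda_i}$ into weight components of distinct weights, apply $K^0,K^1,\dots$ to get that each $K^jv=\sum_i\lambda_i^j v_{\lambda_i}$ lies in $N$, then invert the Vandermonde system — shows $N=\bigoplus_\lambda (N\cap M_\lambda)$, so $N$ is a weight module. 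Condition (1), finite generation of $N$, is the step I expect to be the real obstacle, in that it is the only one that genuinely uses the structure of $U_q(\ms)$: invoking the PBW theorem for $U_q(\ms)$ established in \cite{GK}, the associated graded of $U_q(\ms)$ for the PBW filtration is a (quantum) polynomial algebra, hence $U_q(\ms)$ is left Noetherian, and therefore every submodule of the finitely generated module $M$ is itself finitely generated. (Alternatively, one can follow the argument in \cite{Hu} and first show that every $M\in\mathcal{O}$ is finitely generated over $U_q(\mathfrak{n}^-)$ using the triangular decomposition \eqref{eq1} together with local finiteness of $U_q(\mathfrak{n}^+)$, then apply Noetherianity of $U_q(\mathfrak{n}^-)$; but the direct appeal to Noetherianity of $U_q(\ms)$ is shorter.)

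Finally, to conclude that $\mathcal{O}$ is abelian, note that $\mathcal{O}$ is a full subcategory of the abelian category of $U_q(\ms)$-modules; it is additive because it is closed under finite direct sums, and for any morphism $f\colon M\to M'$ in $\mathcal{O}$ the kernel $\ker f$ is a submodule of $M$ and the cokernel $\operatorname{coker} f$ is a quotient of $M'$, both of which belong to $\mathcal{O}$ by the preceding paragraphs; moreover the canonical map $\operatorname{coim} f\to\operatorname{im} f$ is an isomorphism already in $U_q(\ms)$-modules. Hence $\mathcal{O}$ inherits the abelian structure, completing the proof.
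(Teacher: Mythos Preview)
Your proof is correct. The paper does not actually prove this lemma: it simply prefaces the statement with ``By the similar arguments as those in \cite{Hu}'' and leaves the details to the reader, so your write-up is strictly more detailed than the paper's own treatment. The only mild difference in strategy is that for the finite-generation of submodules you appeal directly to Noetherianity of $U_q(\ms)$ via the PBW theorem of \cite{GK}, whereas the Humphreys-style argument the paper gestures at would first reduce to finite generation over $U_q(\mathfrak{n}^-)$ using the triangular decomposition and then invoke Noetherianity of $U_q(\mathfrak{n}^-)$; you note this alternative yourself, and both routes are valid here.
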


\begin{lemma}\label{basic-property} Let $M$ be any module in $\mathcal{O}$.
\begin{enumerate}[$($1$)$]
\item The module $M$ has a finite filtration
$$0 = M_0 \subset  M_1\subset \cdots  \subset M_n = M$$
such that each subquotient $M_j/M_{j-1}$ for $1 \leq j\leq n$ is a highest weight module.
\item  Each weight space of $M$  is finite dimensional.
\item Any simple module in  $\mathcal{O}$ is isomorphic to some $L(\lambda,\z)$, for $\l, \z\in \C$.
\end{enumerate}
\end{lemma}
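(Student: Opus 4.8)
The argument follows the classical template for category $\mathcal O$ (see \cite{Hu}), with the weight combinatorics transported to the present setting: weights lie in $\C^*$, and the relevant partial order is $\mu\preceq\lambda$ if and only if $\lambda\mu^{-1}\in q^{\Z_+}$, which is a genuine partial order because $q$ is not a root of unity, so that $q^{\Z}$ is torsion-free. Here $U_q(\mathfrak{n}^+)$ has a PBW basis of weight vectors $X^aE^b$ (using $EX=qXE$), with $X$ and $E$ raising the weight by $q$ and $q^2$ respectively, while $Y,F$ lower it by $q$ and $q^2$; a short computation also gives $KZK^{-1}=Z$. For part (1), I would first produce a finite-dimensional, $U_q(\mathfrak{n}^+)$-stable, weight subspace $W\subseteq M$ generating $M$ over $U_q(\ms)$: choosing a finite generating set of weight vectors $v_1,\dots,v_r$ and setting $W=\sum_i U_q(\mathfrak{n}^+)v_i$, this is finite-dimensional by the local finiteness of the $U_q(\mathfrak{n}^+)$-action and is a sum of weight spaces since $U_q(\mathfrak{n}^+)$ is spanned by the $X^aE^b$. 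Among the finitely many weights of $W$ one picks $\lambda$ that is $\preceq$-maximal; then neither $q\lambda$ nor $q^2\lambda$ is a weight of $W$, so any $0\neq v\in W_\lambda$ satisfies $Xv=Ev=0$, i.e.\ $v$ is a highest weight vector, and $M_1:=U_q(\ms)v$ is a highest weight submodule. The quotient $M/M_1$ again lies in $\mathcal O$ (it is finitely generated, a weight module, and $U_q(\mathfrak{n}^+)$ acts on it locally finitely), is generated by the image $\overline{W}$ of $W$, and $\dim\overline{W}<\dim W$ because $v\in W\cap M_1$; an induction on $\dim W$, using that $\mathcal O$ is abelian (preceding lemma), produces the filtration.

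For part (2), by part (1) it suffices to prove that a highest weight module --- hence a quotient of some $\Delta(\lambda,\z)$ --- has finite-dimensional weight spaces, and for that it is enough to treat $\Delta(\lambda,\z)$ itself. In the PBW basis $\{Y^kF^lv_\lambda\}$ the vector $Y^kF^lv_\lambda$ has weight $q^{-k-2l}\lambda$, and for a fixed weight $\mu=q^{-m}\lambda$ only finitely many pairs $(k,l)\in\Z_+^2$ satisfy $k+2l=m$; hence $\dim\Delta(\lambda,\z)_\mu<\infty$.

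For part (3), let $L$ be simple in $\mathcal O$. By part (1), $L$ is a highest weight module, say with highest weight vector $u$ of weight $\lambda$. Since $Z$ is central it commutes with $E$ and $X$, so $p(Z)u$ lies in $L_\lambda$ and is killed by $E$ and $X$ for every $p\in\C[Z]$; as $L_\lambda$ is finite-dimensional by part (2), the operator $Z|_{L_\lambda}$ has an eigenvector $w$, which is therefore a highest weight vector with $Zw=\z w$ for some $\z\in\C$. (Equivalently, $\dim_\C L$ is countable while $\C$ is uncountable, so Schur's lemma forces $Z$ to act on $L$ by a scalar.) Since $L$ is simple it is generated by $w$, so the universal property of the Verma module gives a surjection $\Delta(\lambda,\z)\tto L$; simplicity of $L$ identifies the kernel with $R(\lambda,\z)$, whence $L\cong L(\lambda,\z)$.

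The one step that needs care is the weight-theoretic input in part (1) --- that a finite-dimensional $U_q(\mathfrak{n}^+)$-module always contains a highest weight vector --- where the $\preceq$-maximal weight and the torsion-freeness of $q^{\Z}$ are essential; everything else is bookkeeping with PBW bases and the abelian structure of $\mathcal O$.
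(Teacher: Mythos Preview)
Your argument is correct and is precisely the standard one from \cite{Hu} that the paper invokes; the paper gives no proof of this lemma beyond the sentence ``By the similar arguments as those in \cite{Hu}\ldots'', so you have faithfully filled in exactly what the authors intend. One minor simplification in part~(3): once you know $L$ is a highest weight module generated by $u$, the triangular decomposition gives $L=U_q(\mathfrak{n}^-)u$ and hence $L_\lambda=\C u$ is already one-dimensional, so $Zu=\z u$ immediately without needing to pass to an eigenvector.
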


\section{Blocks of nonzero central charge}
In this section, we assume that $\z\neq 0$.  We denote by $\mathcal{O}[\z]$ the full subcategory of
$\mathcal{O}$ consisting of all modules which are annihilated by some power of the maximal ideal
$\langle Z-\z\rangle$ of $\C[Z]$. Let $\mathcal{O}^{(\mathfrak{sl}_2)}$ denote the BGG category for $U_q(\mathfrak{sl}_2)$.
We will show that there is an equivalence between
$\mathcal{O}^{(\mathfrak{sl}_2)}$ and $\mathcal{O}[\z]$ . Firstly,  we found that the structure of Verma  modules over $U_q(\ms)$
is similar as that of  Verma  modules over $U_q(\sl_2)$.

\subsection{The tensor product realizations of highest modules}

In this subsection, we will give tensor product realizations  of Verma modules using Verma modules over $U_q(\sl_2)$ and $H_q$.
This construction is crucial to the study of the category $\mathcal{O}[\z]$ .

For a nonzero $\z\in \C$,
let $$B_{\z}:=H_q/(H_q(Z-\z)+H_qX)$$ which is a simple $H_q$-module. Denote the image of $Y^i$ in $B_{\z}$ by $v_i$ for $i\in \Z_+$. We can see that
\begin{equation}\label{xaction}X v_i=-\frac{\z(q^i-1)}{q-1}v_{i-1}, i\in\Z_+.\end{equation}
Define the action of $U_q(\sl_2)$ on $B_{\z}$ by
\begin{equation}\aligned
          K v_i&= q^{-\frac{1}{2}-i}v_i, \\
F v_i&=\frac{q^{\frac{1}{2}}}{\z(q+1)}v_{i+2}, \\
E v_i&= \frac{-\z(q^i-1)(q^{i-1}-1)}{q^{i-2}(q^2-1)(q-1)}v_{i-2}.\label{module}
\endaligned
\end{equation}
Then we can check that
$$\aligned
(EF-FE)v_i&=\frac{K-K^{-1}}{q-q^{-1}} v_i,\\
EYv_i &= Xv_i+ q^{-1}YEv_i, \\
FXv_i&= YK^{-1}v_i+XFv_i.
\endaligned
$$
Thus the action (\ref{module}) indeed makes $B_{\z}$  to be a module over $U_q(\ms)$. We denote this $U_q(\ms)$-module by
$\widetilde{B}_{\z}$.  In fact, $\widetilde{B}_{\z}\cong L(q^{-\frac{1}{2}},\z)$.

We can make
a $U_q(\sl_2)$-module $N$  to  be a $U_q(\ms)$-module be defining $H_q N=0$. We denote the resulting
$U_q(\ms)$-module by $\widetilde{N}$.

\begin{lemma}
The following map
\begin{equation}\label{comu}\Delta: U_q(\ms)\rightarrow U_q(\ms)\otimes U_q(\ms)\end{equation}defined by
$$\aligned\Delta(E)&=1\otimes E +E\otimes K, & \Delta(F)&=K^{-1}\otimes F +F\otimes 1\\
\Delta(K)&=K\otimes K, &\Delta(K^{-1})&=K^{-1}\otimes K^{-1},\\
\Delta(X)&=1\otimes X , &  \Delta(Y)&=1\otimes Y.\\
\Delta(Z)&=1\otimes Z
\endaligned
$$ can define  an algebra homomorphism.
\end{lemma}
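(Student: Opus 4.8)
The plan is to verify directly that the assignment $\Delta$ respects every defining relation in \eqref{commrelations}. Since $U_q(\ms)$ is presented by generators and relations, to show $\Delta$ extends to an algebra homomorphism it suffices to check that the images $\Delta(E),\Delta(F),\Delta(K),\Delta(K^{-1}),\Delta(X),\Delta(Y),\Delta(Z)$ in $U_q(\ms)\otimes U_q(\ms)$ satisfy the same relations, where the target is an associative algebra under the componentwise product $(a\otimes b)(c\otimes d)=ac\otimes bd$. First I would record the basic commutation facts in the tensor algebra that will be used repeatedly: in $U_q(\ms)\otimes U_q(\ms)$ the factors from the two tensor slots commute, and within each slot one applies \eqref{commrelations}. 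In particular $\Delta(K)$ acts on the other generators' images by the evident $q$-powers, e.g. $\Delta(K)\Delta(E)\Delta(K)^{-1}=q^2\Delta(E)$ and similarly for $X,Y,F$, which are immediate from $KEK^{-1}=q^2E$, etc., applied in each slot.

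Next I would treat the group-like relations $\Delta(K)\Delta(K^{-1})=\Delta(K^{-1})\Delta(K)=1\otimes 1$ and the centrality of $\Delta(Z)=1\otimes Z$ (this is clear since $1\otimes Z$ commutes with anything of the form $a\otimes b$ because $Z$ is central in each slot). Then come the genuinely interesting checks, which are exactly the ones the authors already wrote out on $\widetilde B_{\z}$ in disguise:
\begin{itemize}
\item $[\Delta(E),\Delta(F)]=\frac{\Delta(K)-\Delta(K)^{-1}}{q-q^{-1}}$: expand $(1\otimes E+E\otimes K)(K^{-1}\otimes F+F\otimes 1)-(K^{-1}\otimes F+F\otimes 1)(1\otimes E+E\otimes K)$, use $KEK^{-1}=q^2E$, $KFK^{-1}=q^{-2}F$ and $[E,F]=\frac{K-K^{-1}}{q-q^{-1}}$ in each slot; the cross terms cancel in the standard $U_q(\sl_2)$ coproduct fashion.
\item $\Delta(q YX-XY)=\Delta(Z)$: since $\Delta(X)=1\otimes X$, $\Delta(Y)=1\otimes Y$, this reduces to $1\otimes(qYX-XY)=1\otimes Z$, which is the relation $qYX-XY=Z$ in the second slot.
\item $\Delta(E)\Delta(X)=q\,\Delta(X)\Delta(E)$ and $\Delta(F)\Delta(Y)=\Delta(Y)\Delta(F)$: the first uses $EX=qXE$ in the second slot and $KX=qXK$ in the first; the second is immediate since $\Delta(Y)=1\otimes Y$ commutes with $K^{-1}\otimes F$ (needs $YF=FY$, i.e. $FY=YF$, in each slot) — here one uses $KYK^{-1}=q^{-1}Y$ together with $K^{-1}\otimes F$ carefully, or notes $1\otimes Y$ commutes with $F\otimes 1$ trivially and with $K^{-1}\otimes F$ via $FY=YF$.
\item $\Delta(E)\Delta(Y)=\Delta(X)+q^{-1}\Delta(Y)\Delta(E)$: compute $(1\otimes E+E\otimes K)(1\otimes Y)=1\otimes EY+E\otimes KY=1\otimes(X+q^{-1}YE)+q^{-1}E\otimes YK$ using $EY=X+q^{-1}YE$ and $KY=q^{-1}YK$; this equals $1\otimes X+q^{-1}(1\otimes Y)(1\otimes E)+q^{-1}(E\otimes K)(1\otimes Y)=\Delta(X)+q^{-1}\Delta(Y)\Delta(E)$.
\item $\Delta(F)\Delta(X)=\Delta(Y)\Delta(K)^{-1}+\Delta(X)\Delta(F)$: compute $(K^{-1}\otimes F+F\otimes 1)(1\otimes X)=K^{-1}\otimes FX+F\otimes X=K^{-1}\otimes(YK^{-1}+XF)+F\otimes X$ using $FX=YK^{-1}+XF$; then $K^{-1}\otimes YK^{-1}=(1\otimes Y)(K^{-1}\otimes K^{-1})=\Delta(Y)\Delta(K)^{-1}$ and $K^{-1}\otimes XF+F\otimes X=(1\otimes X)(K^{-1}\otimes F+F\otimes 1)=\Delta(X)\Delta(F)$, where the last step needs $XF=FX$... \emph{careful}: here instead use that $1\otimes X$ commutes past $F\otimes 1$ and that $K^{-1}\otimes XF$ combines correctly; this is the one place to write the cross terms out.
\end{itemize}

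The main obstacle is bookkeeping rather than conceptual: the relations mixing $U_q(\sl_2)$ and $H_q$ generators (namely $EY=X+q^{-1}YE$ and $FX=YK^{-1}+XF$) are inhomogeneous, so under $\Delta$ one must track the extra $K$-twists that appear from $\Delta(E)$ and $\Delta(F)$ having a $K$ in one slot, and confirm the $q$-powers match using $KXK^{-1}=qX$ and $KYK^{-1}=q^{-1}Y$. I expect the verification of $\Delta(E)\Delta(Y)=\Delta(X)+q^{-1}\Delta(Y)\Delta(E)$ and $\Delta(F)\Delta(X)=\Delta(Y)\Delta(K)^{-1}+\Delta(X)\Delta(F)$ to be the delicate steps; everything else is a short direct computation or follows from the corresponding check already performed for $\widetilde B_{\z}$, whose module structure \eqref{module} is precisely $\widetilde N\otimes\widetilde B_{\z}$-type and was shown above to be consistent. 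Once all relations are verified, $\Delta$ extends uniquely to an algebra homomorphism by the universal property of the presentation, completing the proof.
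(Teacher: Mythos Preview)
The paper states this lemma without proof, leaving it as a routine verification; your plan to check that the images of the generators satisfy each defining relation of \eqref{commrelations} is exactly the standard argument one would supply, and your computations are correct. Your momentary hesitation at $\Delta(F)\Delta(X)$ resolves cleanly: since the two tensor slots commute, $(1\otimes X)(F\otimes 1)=F\otimes X$ requires no relation between $X$ and $F$ in a single slot, so $K^{-1}\otimes XF + F\otimes X = (1\otimes X)(K^{-1}\otimes F + F\otimes 1)=\Delta(X)\Delta(F)$ as needed.
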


\noindent{\bf Remark:} There is no algebra homomorphism $\epsilon:U_q(\ms)\rightarrow \C$ such that the following diagram commutes:
\begin{displaymath}
\xymatrix{
  U_q(\ms) \ar[d]_{id} \ar[r]^{\Delta}
                & U_q(\ms)\otimes U_q(\ms) \ar[d]^{1\otimes\epsilon}  \\
  U_q(\ms) \ar[r]_{can}
                & U_q(\ms)\otimes \C           }
\end{displaymath}
So we can not define a bialgebra structure on $U_q(\ms)$ from $\Delta$.

 Via the map $\Delta$, the space $\widetilde{N}\otimes  \widetilde{B}_{\z}$ can be defined as
a $U_q(\ms)$-module for any $U_q(\sl_2)$-module $N$. More precisely,  for $u\in U_q(\ms)$, if
$\Delta(u)=\Sigma_i u_i\otimes w_i$, then the action of $U_q(\ms)$ on $\widetilde{N}\otimes  \widetilde{B}_{\z}$ is defined by
$$ u(n\otimes b )=\Sigma_i u_i n\otimes w_ib,\ \ \  n\in N, b\in  \widetilde{B}_{\z}.$$

Let $\D_{\sl_2}(\lambda )$ be the Verma module over $U_q(\sl_2)$
with the highest weight $\lambda $ whose unique simple quotient module is $L_{\sl_2}(\lambda )$.   It is well known that the module
 $\D_{\sl_2}(\lambda )$ is reducible  if and only if  $\lambda^2\in q^{2\Z_+}$, see \cite{J}.
 For each $d\in\Z_+$, we have  non-split
short exact sequences
\begin{displaymath}
0\to \Delta_{\sl_2}(q^{-d-2})\to \Delta_{\sl_2}(q^{d}) \to L_{\sl_2}(q^{d})\to 0,
\end{displaymath}
and
\begin{displaymath}
0\to \Delta_{\sl_2}(-q^{-d-2})\to \Delta_{\sl_2}(-q^{d}) \to L_{\sl_2}(-q^{d})\to 0.
\end{displaymath}

The structure of  $\D(\lambda,\z)$  was determined  in \cite{DDM2}.  The following proposition give a constructive
proof.
\begin{proposition}\cite{CCL}\label{p3}
The following results hold.
\begin{enumerate}
\item   If $\z\neq 0$, then  $\D(\lambda,\z)\cong \widetilde{\D}_{\sl_2}(\lambda q^{\frac{1}{2}})\otimes \widetilde{B}_{\z}$.
Therefore the Verma  module $\Delta(\lambda,\z)$ is reducible  if and only if  $\lambda^2\in q^{2\Z_+-1}$.  Moreover, $L(\lambda,z)\cong \widetilde{L}_{\sl_2}(\lambda q^{\frac{1}{2}})\otimes \widetilde{B}_{\z}$.
\item For each $d\in\Z_+$, we have a non-split
short exact sequences
\begin{displaymath}
0\to \Delta(q^{-d-\frac{5}{2}},\z)\to \Delta(q^{d-\frac{1}{2}},\z) \to L(q^{d-\frac{1}{2}},\z)\to 0,
\end{displaymath}
and
\begin{displaymath}
0\to \Delta(-q^{-d-\frac{5}{2}},\z)\to \Delta(-q^{d-\frac{1}{2}},\z) \to L(-q^{d-\frac{1}{2}},\z)\to 0.
\end{displaymath}
\end{enumerate}
\end{proposition}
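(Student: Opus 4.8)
The plan is to realize every highest weight module of central charge $\z$ as a tensor product of a $U_q(\sl_2)$-module with the fixed $H_q$-module $\widetilde{B}_{\z}$, and then to transport the known structure of $U_q(\sl_2)$-Verma modules through this correspondence. Set $\mu=\lambda q^{1/2}$ and let $w_\mu$ be a highest weight vector of $\Delta_{\sl_2}(\mu)$. First I would check, directly from the definition of $\Delta$ together with \eqref{xaction} and \eqref{module}, that $v:=w_\mu\otimes v_0\in\widetilde{\Delta}_{\sl_2}(\mu)\otimes\widetilde{B}_{\z}$ is a highest weight vector of weight $\lambda$ with $Zv=\z v$: since $\Delta(X)=1\otimes X$ and $Xv_0=0$ we get $Xv=0$; since $\Delta(E)=1\otimes E+E\otimes K$ and $Ew_\mu=0$, $Ev_0=0$ we get $Ev=0$; and $Kv=\mu q^{-1/2}v=\lambda v$. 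The universal property of $\Delta(\lambda,\z)$ then gives a surjection $\pi\colon\Delta(\lambda,\z)\tto\widetilde{\Delta}_{\sl_2}(\mu)\otimes\widetilde{B}_{\z}$. To see $\pi$ is an isomorphism I would compare formal characters for the $K$-grading: the PBW basis $\{Y^kF^lv_\lambda\}$ gives $\ch\Delta(\lambda,\z)=\sum_{k,l\ge 0}e^{\lambda q^{-k-2l}}$, while the basis $\{F^lw_\mu\otimes v_i\}$ gives $\ch(\widetilde{\Delta}_{\sl_2}(\mu)\otimes\widetilde{B}_{\z})=\sum_{l,i\ge 0}e^{\mu q^{-2l}q^{-1/2-i}}=\sum_{l,i\ge 0}e^{\lambda q^{-2l-i}}$; these agree, all weight spaces are finite dimensional, so $\pi$ is an isomorphism. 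This establishes the first isomorphism of part (1).

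For the remaining assertions I would work with the functor $\mathcal{F}(-):=\widetilde{(-)}\otimes\widetilde{B}_{\z}$ from $U_q(\sl_2)$-modules to $U_q(\ms)$-modules. From the shape of $\Delta$ one checks that $\mathcal{F}$ carries $U_q(\sl_2)$-submodules to $U_q(\ms)$-submodules and intertwiners to intertwiners, and since $-\otimes_{\C}\widetilde{B}_{\z}$ is exact, $\mathcal{F}$ is exact. The one substantive input I would isolate as a lemma is: \emph{if $N$ is a simple highest weight $U_q(\sl_2)$-module, then $\mathcal{F}(N)$ is a simple $U_q(\ms)$-module.} Granting this, part (1) is finished, since $\Delta_{\sl_2}(\mu)$ is reducible iff $\mu^2\in q^{2\Z_+}$, i.e. iff $\lambda^2\in q^{2\Z_+-1}$, and then $\Delta(\lambda,\z)=\mathcal{F}(\Delta_{\sl_2}(\mu))$ is reducible exactly then with unique simple quotient $\mathcal{F}(L_{\sl_2}(\mu))=\widetilde{L}_{\sl_2}(\lambda q^{1/2})\otimes\widetilde{B}_{\z}$, which is simple and therefore equals $L(\lambda,\z)$. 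For part (2), I would apply $\mathcal{F}$ to the two non-split $U_q(\sl_2)$-sequences displayed just before the statement; using $q^{-d-2}=q^{-d-5/2}\cdot q^{1/2}$ and $\pm q^{d}=\pm q^{d-1/2}\cdot q^{1/2}$, exactness and the identifications above turn them into the asserted sequences $0\to\Delta(q^{-d-5/2},\z)\to\Delta(q^{d-1/2},\z)\to L(q^{d-1/2},\z)\to 0$ and its sign-twisted analogue. Non-splitness is automatic because $\dim\Delta(q^{d-1/2},\z)_{q^{d-1/2}}=1$, so $\Delta(q^{d-1/2},\z)$ is indecomposable.

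It remains to prove the lemma, and this is where the actual work lies. The structural facts I would use are: $X=\Delta(X)=1\otimes X$ acts locally nilpotently on $\widetilde{N}\otimes\widetilde{B}_{\z}$ and, because $q$ is not a root of unity and $\z\ne 0$, \eqref{xaction} gives $\ker X=N\otimes\C v_0$; the operator $E=\Delta(E)$ preserves $N\otimes\C v_0$ and acts there as $q^{-1/2}(E_N\otimes\mathrm{id})$ since $Ev_0=0$; and $E$ is locally nilpotent on $\widetilde{N}\otimes\widetilde{B}_{\z}$ (so this module again lies in $\mathcal{O}$ when $N$ does). Given a nonzero submodule $W$, local nilpotence of $X$ forces $0\ne W\cap\ker X=W\cap(N\otimes\C v_0)$; applying $E$ repeatedly to a nonzero element there and using local nilpotence of $E$ produces a nonzero $w\in W$ with $Ew=Xw=0$ and $w\in N\otimes\C v_0$. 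When $N=L_{\sl_2}(\mu)$ the kernel of $E_N$ on $N$ is one-dimensional, so $w$ is a scalar multiple of $w_\mu\otimes v_0$; since the weight-$\mu q^{-1/2}$ space of $\widetilde{N}\otimes\widetilde{B}_{\z}$ is one-dimensional and $\widetilde{N}\otimes\widetilde{B}_{\z}$ is a highest weight quotient of $\Delta(\mu q^{-1/2},\z)$, this vector generates the whole module, whence $W=\widetilde{N}\otimes\widetilde{B}_{\z}$.

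The main obstacle will be this lemma: making the local-nilpotence statements for $X$ and $E$ on the tensor product precise (via the $q$-binomial expansion of $\Delta(E)^n$), identifying $\ker X$ exactly, and checking the relevant weight spaces are one-dimensional. Once that is in place, the isomorphism $\Delta(\lambda,\z)\cong\widetilde{\Delta}_{\sl_2}(\lambda q^{1/2})\otimes\widetilde{B}_{\z}$, the reducibility criterion, and the short exact sequences become bookkeeping through the exact functor $\mathcal{F}$.
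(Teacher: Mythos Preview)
The paper does not actually prove this proposition; it is quoted from \cite{CCL} (the surrounding text only says that the tensor realization constitutes ``a constructive proof'' and provides the ingredients $\widetilde{B}_{\z}$ and the map $\Delta$). Your argument is correct and fills in precisely the details the paper omits: the highest-weight check for $w_\mu\otimes v_0$, the character comparison (both sides have $\lfloor n/2\rfloor+1$ in weight $\lambda q^{-n}$, using that $q$ is not a root of unity), exactness of $\mathcal{F}=\widetilde{(-)}\otimes\widetilde{B}_{\z}$, and the transport of the $U_q(\sl_2)$ short exact sequences.

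Your key lemma and its proof are also correct. The identification $\ker X=N\otimes\C v_0$ (from \eqref{xaction} and $\z\ne 0$, $q$ not a root of unity), the observation that $EX=qXE$ makes $\ker X$ $E$-stable, and that $E$ acts on $N\otimes\C v_0$ as $q^{-1/2}(E_N\otimes\mathrm{id})$, together force any nonzero submodule to contain $w_\mu\otimes v_0$; since $\mathcal{F}(L_{\sl_2}(\mu))$ is a highest weight quotient of $\Delta(\mu q^{-1/2},\z)$, this vector generates, and simplicity follows. It is worth noting that this is exactly the mechanism the paper itself exploits later, in the proof of Proposition~\ref{mainlemma}, to peel off the $\widetilde{B}_{\z}$ factor (there the r\^oles are reversed: one starts from an arbitrary $V\in\mathcal{O}[\z]$ and reconstructs $N$). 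So your approach is not merely consistent with the paper---it anticipates the main technical device of Section~3.2.
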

%

\subsection{ Equivalence between
$\mathcal{O}^{(\mathfrak{sl}_2)}$ and $\mathcal{O}[\z]$ .}

\begin{lemma}\label{lemma7} If $\z\neq 0$, then any module in $\mathcal{O}[\z]$ has finite composition length.
\end{lemma}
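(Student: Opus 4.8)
The plan is to reduce the claim to the known fact that Verma modules over $U_q(\sl_2)$ have finite length, using the tensor product realization established in Proposition \ref{p3}. First I would invoke Lemma \ref{basic-property}: any $M \in \mathcal{O}[\z]$ has a finite filtration whose subquotients are highest weight modules, so it suffices to prove that every highest weight module $V$ with central charge $\z \neq 0$ has finite composition length. Since $V$ is a quotient of some Verma module $\Delta(\lambda,\z)$, and composition length can only decrease under passing to quotients (once finiteness is known), it is enough to show that $\Delta(\lambda,\z)$ itself has finite length.

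For this, I would use part (1) of Proposition \ref{p3}: $\Delta(\lambda,\z) \cong \widetilde{\Delta}_{\sl_2}(\lambda q^{1/2}) \otimes \widetilde{B}_{\z}$ as $U_q(\ms)$-modules. The key structural input is that $\widetilde{B}_{\z} \cong L(q^{-1/2},\z)$ is \emph{simple} as a $U_q(\ms)$-module, and that tensoring (via $\Delta$) with this fixed module is an exact functor from $U_q(\sl_2)$-modules to $U_q(\ms)$-modules. I would then argue that this functor sends a composition series of the $U_q(\sl_2)$-module $\Delta_{\sl_2}(\lambda q^{1/2})$ to a filtration of $\Delta(\lambda,\z)$ whose subquotients are $\widetilde{L}_{\sl_2}(\mu) \otimes \widetilde{B}_{\z} \cong L(\mu q^{-1/2},\z)$, hence simple by Proposition \ref{p3}(1) again. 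Since $\Delta_{\sl_2}(\lambda q^{1/2})$ has finite length over $U_q(\sl_2)$ (it has at most two composition factors, by the displayed short exact sequences preceding Proposition \ref{p3}), $\Delta(\lambda,\z)$ has finite length over $U_q(\ms)$.

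The one point requiring care — and the main obstacle — is verifying that $N \mapsto \widetilde{N} \otimes \widetilde{B}_{\z}$ is genuinely exact and that it does not collapse simple modules to zero or merge composition factors; equivalently, that the functor detects and preserves the length. Exactness is automatic because $- \otimes \widetilde{B}_{\z}$ is $-\otimes_{\C}(\text{vector space})$ on underlying spaces, with the $U_q(\ms)$-action transported along the fixed comultiplication-like map $\Delta$; short exact sequences of $U_q(\sl_2)$-modules remain short exact as sequences of $\C$-spaces and the maps are $U_q(\ms)$-linear by construction. That the subquotients $\widetilde{L}_{\sl_2}(\mu)\otimes \widetilde B_{\z}$ are nonzero and simple is exactly the content of the last assertion of Proposition \ref{p3}(1). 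Thus no composition factor is lost, and the length of $\Delta(\lambda,\z)$ equals that of $\Delta_{\sl_2}(\lambda q^{1/2})$, which is finite. This completes the reduction and hence the proof.
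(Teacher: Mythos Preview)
Your proposal is correct and follows essentially the same approach as the paper: reduce via Lemma~\ref{basic-property}(1) to highest weight modules, hence to Verma modules, and then invoke Proposition~\ref{p3} to conclude that each $\Delta(\lambda,\z)$ has finite length. The paper's proof simply cites Proposition~\ref{p3} for this last step, while you spell out the mechanism (exactness of $-\otimes\widetilde{B}_{\z}$ and preservation of simples) in more detail; this is the same argument at a finer level of granularity.
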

\begin{proof} According  to (1) in Lemma \ref{basic-property}, any nonzero module $M$ in $\mathcal{O}[\z]$ has a finite
filtration with sub-quotients given by highest weight modules. Hence it suffices to treat the case that $M$ is the Verma module
$\D(\lambda,\z)$. By Proposition \ref{p3}, $\D(\lambda,\z)$ has finite composition length if $\z\neq 0$.
\end{proof}

\begin{proposition}\label{mainlemma}
Suppose that $ V$ is a module in $\mathcal{O}[\z]$ with nonzero central
charge $\dot{z}$.  Then $V\cong \widetilde{N}\otimes  \widetilde{B}_{\z}$ for some
$U_q(\sl_2)$-module $N$.
\end{proposition}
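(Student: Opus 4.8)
The plan is to show that, as a module over the quantum Weyl algebra $H_q$, $V$ is a direct sum of copies of the Fock-type module $B_{\z}$, and that the $U_q(\sl_2)$-action descends to the multiplicity space $\ker_V X$; this is a quantum analogue of the argument used for the Schr\"odinger Lie algebra in \cite{DLMZ}. Two preliminary remarks underlie everything. Since $V\in\mathcal{O}$ the action of $U_q(\mathfrak{n}^+)$ is locally finite, and $KXK^{-1}=qX$ shows that $X$ multiplies the $K$-weight by $q$; as $q$ is not a root of unity the weights $q^{k}\lambda$ are pairwise distinct, so $X$ acts locally nilpotently on $V$. Moreover, since $Z$ acts by the nonzero scalar $\z$ and $qYX-XY=Z$, the subalgebra $H_q$ acts on $V$ through $H_q/(Z-\z)$. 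Put $N:=\{v\in V\mid Xv=0\}$; if $V\neq 0$ then $N\neq 0$, by applying a maximal power of $X$ to a nonzero vector, and from $KXK^{-1}=qX$ and $EX=qXE$ the subspace $N$ is stable under $K^{\pm1}$ and $E$, and is a weight module.

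First I would establish the $H_q$-decomposition of $V$. For $n\in N$ one checks by induction, using $XY=qYX-\z$ and $Xn=0$, that $XY^{j}n=c_{j}\,Y^{j-1}n$ with $c_{j}=-\z\frac{q^{j}-1}{q-1}$ --- precisely the scalars occurring in (\ref{xaction}) --- and each $c_{j}$ with $j\geq 1$ is nonzero because $q$ is not a root of unity. Hence $H_q n=\span\{Y^{j}n\mid j\geq 0\}$. Using local nilpotence of $X$ I would then prove $V=\sum_{j\geq 0}Y^{j}N$: if $v$ is killed by $X^{k+1}$ but not by $X^{k}$, then $n':=X^{k}v\in N$, the element $X^{k}(Y^{k}n')$ is a nonzero scalar multiple of $n'$, and subtracting the appropriate multiple of $Y^{k}n'$ from $v$ produces a vector killed by $X^{k}$, so one concludes by induction on $k$. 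Applying $X^{m}$ to a relation $\sum_{0\leq j\leq m}Y^{j}n_{j}=0$ and using the displayed formula annihilates every term with $j<m$ and leaves a nonzero multiple of $n_{m}$; hence the sum is direct, $V=\bigoplus_{j\geq 0}Y^{j}N$. Consequently the linear map $\Phi\colon N\otimes B_{\z}\to V$, $\Phi(n\otimes v_{j})=Y^{j}n$, is a bijection which intertwines the actions of $X$ and $Y$, where $H_q$ acts on the $B_{\z}$-factor only.

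Next I would equip $N$ with a $U_q(\sl_2)$-structure and identify $V$ with $\widetilde N\otimes\widetilde B_{\z}$. Guided by the coproduct $\Delta$ and by the tensor realization of Proposition~\ref{p3}, set $K_N:=q^{1/2}K|_N$, $E_N:=q^{1/2}E|_N$ and $F_N:=\bigl(F-\tfrac{1}{\z(q+1)}Y^{2}K^{-1}\bigr)|_N$; using $FX=YK^{-1}+XF$ together with $XY^{2}m=-\z(q+1)Ym$ for $m\in N$, one checks that $F_N$ maps $N$ into $N$. A direct computation with the relations (\ref{commrelations}) and the formula for $XY^{j}$ on $\ker X$ then verifies the $U_q(\sl_2)$-relations for $E_N,F_N,K_N^{\pm1}$; the only nontrivial one is $[E_N,F_N]=\frac{K_N-K_N^{-1}}{q-q^{-1}}$, which reduces to the identity $[E,Y^{2}K^{-1}]|_N=-\z K^{-1}|_N$. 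Now $\widetilde N\otimes\widetilde B_{\z}$ is defined via $\Delta$, and it remains to see that $\Phi$ also respects $E$, $F$ and $K$. On $N\otimes v_{0}$ this holds by the very definitions of $E_N,F_N,K_N$ and the formulas (\ref{module}); to get it in general one propagates along $Y$, rewriting $EY^{j}$, $FY^{j}$, $KY^{j}$ inside $U_q(\ms)$ with all occurrences of $E$ (resp.\ $F$, resp.\ $K$) pushed to the right of the $Y$'s by means of $EY=X+q^{-1}YE$, $FY=YF$, $KYK^{-1}=q^{-1}Y$, so that both sides of $\Phi\bigl(u\cdot(n\otimes v_{j})\bigr)=u\cdot\Phi(n\otimes v_{j})$ collapse to the same expression, since $\Phi$ already intertwines $X$, $Y$ and the generators on $N\otimes v_{0}$. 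Thus $\Phi$ is an isomorphism of $U_q(\ms)$-modules, proving $V\cong\widetilde N\otimes\widetilde B_{\z}$ with $N$ the desired $U_q(\sl_2)$-module.

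The main obstacle is the middle step --- the $H_q$-decomposition --- and within it the surjectivity $V=\sum_{j}Y^{j}N$, which genuinely uses the local nilpotence of $X$ guaranteed by membership in $\mathcal{O}$; the other delicate point is identifying the correction term $-\tfrac{1}{\z(q+1)}Y^{2}K^{-1}$ in $F_N$. Once these are settled, the remaining verifications of the defining relations are mechanical.
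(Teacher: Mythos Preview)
Your proof is correct and takes a genuinely different route from the paper's. The paper argues by induction on the composition length of $V$ (invoking Lemma~\ref{lemma7} and Proposition~\ref{p3} for the base case), then at the inductive step writes $V$ as an extension of two tensor products and argues, by applying powers of $X$ to $E(w\otimes v_0)$ and using $FX=YK^{-1}+XF$ on $F(w\otimes v_0)$, that the $E$- and $F$-actions must be of tensor form. Your argument bypasses the induction entirely: you show directly that local nilpotence of $X$ forces the $H_q$-module $V$ to decompose as $\bigoplus_{j\geq 0}Y^jN$ with $N=\ker X$, and then you write down explicit formulas $K_N=q^{1/2}K$, $E_N=q^{1/2}E$, $F_N=F-\frac{1}{\z(q+1)}Y^2K^{-1}$ for the $U_q(\sl_2)$-action on $N$ and verify the relations. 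The payoff of your approach is that it is self-contained (no need for Lemma~\ref{lemma7} or the Verma-module realization of Proposition~\ref{p3}), it identifies $N$ intrinsically as $\ker_V X$, and it makes the inverse functor in Theorem~\ref{prop721} visible. The paper's inductive argument, on the other hand, avoids the explicit correction term in $F_N$ and the commutator check $[E,Y^2K^{-1}]|_N=-\z K^{-1}|_N$, trading those computations for softer structural input. One small remark: like the paper, you tacitly assume $Z$ acts by the scalar $\z$ rather than merely $(Z-\z)^k V=0$; this is needed for your formula $XY^j n=c_j Y^{j-1}n$ and is implicit in the phrase ``central charge $\dot z$''.
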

\begin{proof} By Lemma \ref{lemma7}, $V$ has a finite composition length $l.$  We will proceed the proof by induction on $l$.
Firstly, we consider the case $l=1$. The fact  that $V\in \mathcal{O}$ forces that $V$ is a simple highest weight  $U_q(\ms)$-module.
Then $V$ is a simple quotient module of some  Verma module  $\D(\lambda,z)$ . Note that $ \D(\lambda,z) \cong \widetilde{\D}_{\sl_2}(\lambda q^{\frac{1}{2}})\otimes \widetilde{B}_{\z}$. Thus $V\cong \widetilde{L}_{\sl_2}(\lambda q^{\frac{1}{2}})\otimes \widetilde{B}_{\z}$.

Next, we consider the general case. Let $M$ be a maximal submodule of $V$.  By the induction hypothesis, we see that
$$M\cong \widetilde{N_1}\otimes  \widetilde{B}_{\z}, \qquad V/M \cong \widetilde{N_2}\otimes  \widetilde{B}_{\z},$$ where $N_1, N_2$ are
$U_q(\sl_2)$-modules.

As vector spaces,  we can assume that  $V=N\otimes {B}_{\z}$,  where $N$ is a vector space such that $N_1\subseteq N$ and $N/N_1\cong N_2$.
Moreover, $u(w\otimes v)=w\otimes uv, K(w\otimes v)= (Kw)\otimes(K v)$,  for $u\in H_q, w\in N, v\in B_{\z}$.

For $w\in N$,  we can find   $w_i\in N, 0 \leq i \leq k$ such that~$E( w\otimes v_0)=\sum_{i=0}^k w_i\otimes v_i$, where
$v_i$  is the image of $Y^i$ in $B_{\z}$ .  From $X^kE=q^{-k}EX^k, Xv_0=0$, we have
$$X^kE( w\otimes v_0)=w_k\otimes X^k v_k=q^{-k}( w\otimes X^k v_0)=0. $$  By (\ref{xaction}), when $k>0$,  $X^k v_k\neq 0$ . So we must have that $k=0$.  Denote $w_E=q^{\frac{1}{2}}w_0$. Then
 $E( w\otimes v_0)=w_E\otimes K v_0$.

 From $$E v_i= \frac{-\z(q^i-1)(q^{i-1}-1)}{q^{i-2}(q^2-1)(q-1)}v_{i-2}$$ and
\begin{equation}
EY^i=q^{-i}Y^iE+[i]Y^{i-1}X-\frac{(q^i-1)(q^{i-1}-1)}{q^{i-2}(q^2-1)(q-1)}ZY^{i-2},\end{equation}
we obtain that
\begin{equation}\label{EA} \aligned & E(w\otimes v_i)=EY^i(w\otimes v_0)\\
=& \Big(q^{-i}Y^iE+[i]Y^{i-1}X-\frac{(q^i-1)(q^{i-1}-1)}{q^{i-2}(q^2-1)(q-1)}ZY^{i-2}\Big)(w\otimes v_0)\\
=& w_E\otimes K v_i + w\otimes E v_i.
\endaligned\end{equation}
 By the action of $F$ on $N/N_1\otimes \widetilde{B}_{\z}$, there exist $w_F\in N,  w'_i\in N$, $1\leq i \leq k$ satisfying
$$F( w\otimes v_0)=  w_F \otimes v_0+ K^{-1}w \otimes F v_0+\sum_{i=1}^k w'_i\otimes v_i, $$ where  $\sum_{i=1}^k w'_i\otimes v_i\in N_1\otimes \widetilde{B}_{\z}$.

From $$\aligned 0=& F( w\otimes Xv_0)=FX( w\otimes v_0)\\
=& XF( w\otimes v_0)+YK^{-1}( w\otimes v_0)\\
=& K^{-1}w \otimes X F v_0+\sum_{i=1}^k  w'_i\otimes X v_i+q^{\frac{1}{2}}(K^{-1}w)\otimes v_1\\
= & -q^{\frac{1}{2}}(K^{-1}w) \otimes  v_1- \frac{\z(q^i-1)}{q-1}\sum_{i=1}^k  w'_i\otimes  v_{i-1}+q^{\frac{1}{2}}(K^{-1}w)\otimes v_1\\
=& - \frac{\z(q^i-1)}{q-1}\sum_{i=1}^k  w'_i\otimes  v_{i-1},
\endaligned$$ we have $ w'_i=0$, for any $1\leq i \leq k.$
Then $$ F( w\otimes v_0)=  w_F \otimes v_0+ K^{-1}w \otimes F v_0.$$  Consequently,  from $YF=FY$ and $Y^iv_0=v_i$, we have
 \begin{equation}\label{FA}F( w\otimes v_i)=  w_F \otimes v_i+ K^{-1}w \otimes F v_i,\ \ i\in\Z_+. \end{equation}
We can define the action of $U_q(\sl_2)$  on $N$ as follows:
$$E\cdot w= w_E,\ \ F\cdot w= w_F,\ \ w\in N.$$

From (\ref{EA}) and (\ref{FA}), we can see that $V\cong \widetilde{N}\otimes  \widetilde{B}_{\z}$.
The proof is complete.
\end{proof}

By Lemma \ref{mainlemma}, we have the following category equivalence.
\begin{theorem}\label{prop721} If $\z\neq 0$,
using the algebra homomorphism $\Delta$ in (\ref{comu}), we can define a functor
\begin{displaymath}
-\otimes \tilde{B}_{\z}: \mathcal{O}^{(\mathfrak{sl}_2)}\to \mathcal{O}[\z].
\end{displaymath}
Moreover, this functor is an equivalence of categories.
\end{theorem}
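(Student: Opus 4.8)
The plan is to show that the functor $G := -\otimes\widetilde{B}_{\z}$ from $\mathcal{O}^{(\mathfrak{sl}_2)}$ to $\mathcal{O}[\z]$ is well-defined, fully faithful, and essentially surjective. For well-definedness, I would first check that $\widetilde{N}\otimes\widetilde{B}_{\z}$ actually lies in $\mathcal{O}[\z]$ whenever $N\in\mathcal{O}^{(\mathfrak{sl}_2)}$: the central charge is $\z$ by the definition of $\Delta(Z)$; $K$ acts semisimply since $\Delta(K)=K\otimes K$ and both tensor factors are weight modules; local $U_q(\mathfrak{n}^+)$-finiteness follows from the formulas for $\Delta(E)$ and $\Delta(X)$ together with the fact that $X$ acts locally nilpotently on $B_{\z}$ (by \eqref{xaction}) and $E$ acts locally finitely on $N$; and finite generation over $U_q(\ms)$ follows from finite generation of $N$ over $U_q(\sl_2)$ because $\widetilde{B}_{\z}$ is cyclic (generated by $v_0$). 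Functoriality on morphisms is immediate: a $U_q(\sl_2)$-map $\phi\colon N\to N'$ gives $\phi\otimes\mathrm{id}$, which one checks commutes with the $U_q(\ms)$-action using the explicit $\Delta$.

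For essential surjectivity, the work is already done: Proposition \ref{mainlemma} says every $V\in\mathcal{O}[\z]$ with central charge $\z\neq 0$ is isomorphic to $\widetilde{N}\otimes\widetilde{B}_{\z}$ for some $U_q(\sl_2)$-module $N$, and one then notes that such an $N$ must itself lie in $\mathcal{O}^{(\mathfrak{sl}_2)}$ — finitely generated and $E$-locally-finite because $V$ is, via the cyclicity of $v_0$ in $\widetilde B_\z$ and the formulas \eqref{EA}, \eqref{FA} which express the $U_q(\sl_2)$-structure on $N$ through the $U_q(\ms)$-structure on $V$ restricted to the ``lowest slice'' $N\otimes v_0$. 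So it remains to construct the inverse functor, or equivalently to prove full faithfulness.

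The cleanest route is to exhibit a quasi-inverse $\Phi\colon\mathcal{O}[\z]\to\mathcal{O}^{(\mathfrak{sl}_2)}$. Given $V\in\mathcal{O}[\z]$, set $\Phi(V) := \{v\in V \mid Xv=0\}$, the kernel of $X$ on $V$; since $Z$ acts by $\z\ne 0$ and $qYX-XY=Z$, one checks $X$ is locally nilpotent on $V$ (each weight vector is killed by a power of $X$, as $X$ lowers the $K$-weight by the factor $q$ and $V$ has finite composition length by Lemma \ref{lemma7}), so $\Phi(V)\neq 0$ when $V\neq 0$. I would then equip $\Phi(V)$ with a $U_q(\sl_2)$-action: $K,K^{-1}$ preserve $\ker X$ (they normalize $X$), and for $E,F$ one \emph{defines} the action by $E\cdot w := q^{\frac12}(\text{coefficient of }K v_0\text{ in }E(w\otimes v_0))$-type formulas — more precisely, one shows that for $w\in\ker X$ the element $Ew$ (resp.\ a suitable correction of $Fw$) again lies in $\ker X$, mirroring exactly the computation in the proof of Proposition \ref{mainlemma} where it was shown $k=0$, i.e. $E(w\otimes v_0)=w_E\otimes Kv_0$. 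Under the identification $V\cong\widetilde{N}\otimes\widetilde{B}_{\z}$ one has $\ker X = N\otimes v_0 \cong N$ as $U_q(\sl_2)$-modules, which gives natural isomorphisms $\Phi\circ G\cong\mathrm{id}$ and $G\circ\Phi\cong\mathrm{id}$; naturality is routine since all maps are built from $\phi\otimes\mathrm{id}$ and inclusions of subspaces.

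The main obstacle is the verification that $\Phi$ is well-defined as a functor into $\mathcal{O}^{(\mathfrak{sl}_2)}$ — specifically, that $E$ and the corrected $F$ genuinely preserve $\ker X$ and satisfy the $U_q(\sl_2)$ relations, and that the resulting module is finitely generated and $E$-locally-finite. This is essentially a coordinate-free repackaging of the explicit bookkeeping in the proof of Proposition \ref{mainlemma} (the vanishing $w'_i=0$ forced by $X$-equivariance, and formulas \eqref{EA}, \eqref{FA}); the key structural input is that $\widetilde{B}_{\z}$ is a simple $H_q$-module which is free of rank one as a $\C[Y]$-module with $X$ acting locally nilpotently, so that every $V\in\mathcal{O}[\z]$ decomposes, compatibly with $H_q$, as $\ker X\otimes_{\C}\C[Y]v_0$. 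Once that decomposition is in place, transporting the $U_q(\sl_2)$-action along it and checking $\Delta$-compatibility is mechanical. I would present the argument by first proving the $H_q$-module decomposition lemma, then defining $\Phi$, then checking $\Phi G\cong\mathrm{id}$ and $G\Phi\cong\mathrm{id}$ using Proposition \ref{mainlemma} to handle the ``only if'' direction of the correspondence.
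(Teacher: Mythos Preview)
Your approach is correct and differs genuinely from the paper's. The paper does not construct a quasi-inverse functor; instead, after noting essential surjectivity (via Proposition~\ref{mainlemma}) and the trivial injectivity of $F_{M,N}$, it proves surjectivity of $F_{M,N}$ by a direct argument: given $g\in\Hom_{\mathcal{O}[\z]}(F(M),F(N))$ and $w\in M$, write $g(w\otimes v_0)=\sum_{i=0}^k w_i\otimes v_i$, invoke the Jacobson density theorem on the simple $H_q$-module $\widetilde B_{\z}$ to produce $u\in H_q$ with $uv_0=v_0$ and $uv_i=0$ for $i>0$, and conclude $g(w\otimes v_0)=w_0\otimes v_0$; then set $f(w):=w_0$ and verify it is a $U_q(\sl_2)$-map. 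Your route via $\Phi(V)=\ker X$ is more structural: it packages the same ``extract the $v_0$-coefficient'' idea as a functor, and in fact your observation that any $U_q(\ms)$-map preserves $\ker X$ yields the key step $g(w\otimes v_0)\in N\otimes v_0$ \emph{without} the density theorem. The trade-off is that you must set up the $H_q$-module decomposition lemma and the twisted $E,F$-action on $\ker X$ carefully (in particular the $F$-action, which does not literally preserve $\ker X$ and needs the projection afforded by the decomposition $V\cong(\ker X)\otimes B_{\z}$), whereas the paper sidesteps this by working only with already-decomposed objects $F(M)$, $F(N)$ and proving full faithfulness directly.

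One small slip: you write that ``$X$ lowers the $K$-weight by the factor $q$'', but $KXK^{-1}=qX$ means $X$ \emph{raises} the weight by $q$. The local nilpotence of $X$ you need follows immediately from the category-$\mathcal{O}$ axiom that $U_q(\mathfrak{n}^+)$ acts locally finitely, so this does not affect your argument.
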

\begin{proof}By the definition of category $ \mathcal{O}$, the functor $F:=-\otimes \tilde{B}_{\z}$ maps
modules in $\mathcal{O}^{(\mathfrak{sl}_2)}$ to modules in $ \mathcal{O}[\z]$.  By Lemma \ref{mainlemma}, the functor $F$
is essentially surjective.

Next,  we will show that for any $M,N\in \text{Obj}(\mathcal{O}^{(\mathfrak{sl}_2)}$),   the map $$F_{M,N}:\Hom_{\mathcal{O}^{(\mathfrak{sl}_2)}}(M,N)\rightarrow \Hom_{\mathcal{O}[\z]}(F(M),F(N))$$ is a bijection.

For $f\in \Hom_{\mathcal{O}^{(\mathfrak{sl}_2)}}(M,N)$ such that $F_{M,N}(f)=f\otimes 1=0$,
we must  have
 $f=0$. So  $F_{M,N}$ is injective.

For any $g\in \Hom_{\mathcal{O}[\z]}(F(M),F(N))$,  suppose that $$g(w\otimes v_0)=\sum_{i=0}^k w_i\otimes v_i, w\in M, w_k\in N,$$
where $v_i$ is the image of $Y^i$ in $ B_{\z}$ . Since $\tilde{B}_{\z}$ is a simple $H_q$-module,  by the density theorem, there exists
 ~$u\in H_q$ such that $$uv_0=v_0, uv_i=0,\ \  i=1,\dots, k.$$
  Form ~$$ug(w\otimes v_0)=g(u(w\otimes v_0))=g(w\otimes uv_0 ),$$  we have:
 $$u(\sum_{i=0}^k w_k\otimes v_k)=\sum_{i=0}^k w_k\otimes uv_k= w_0\otimes v_0=g(w\otimes v_0 ).$$
Define the map $f:M\rightarrow N$ such that $f(w)=w_0$, i.e., $g(w\otimes v_0 )=f(w)\otimes v_0.$

From $Y^ig(w\otimes v_0 )=g(w\otimes Y^iv_0 )$,  we have $$g(w\otimes v_i )=f(w)\otimes v_i, \ \ \forall\ i\in\Z_+.$$
By  $Eg(w\otimes v_0 )=g(E(w\otimes v_0))$ and $Ev_0=0$,  we have that
  $$\aligned Ef(w)\otimes K v_0=& f(w)\otimes Ev_0+ f(Ew)\otimes K v_0
  =&f(Ew)\otimes Kv_0,\endaligned$$
  i.e., $Ef(w)=f(Ew)$. Similarly, we can check that $Ff(w)=f(Fw)$, $Kf(w)=f(Kw)$.
  Then $f\in \Hom_{\mathcal{O}^{(\mathfrak{sl}_2)}}(M,N)$.   So $F_{M,N}(f)=g$, $F_{M,N}$ is surjective.

 Therefore $F_{M,N}$ is bijective, and hence  $F:=-\otimes \tilde{B}_{\z}$  is an equivalence.
\end{proof}

\subsection{ The description of $\mathcal{O}[\xi,\c,\z]$}

Let $C = FE+\frac{qK+q^{-1}K^{-1}}{(q-q^{-1})^2}$ be the Casimir element of $U_q(\sl_2)$. Define the following element in $U_q(\ms)$:
$$\tilde{C}=Z\big((1+q^{-1})C+\frac{1}{q^2-1}K^{-1}\big)+ X^2F-Y^2EK^{-1} + XY (q^{-1}FE-qEF).$$
In \cite{CCL}, the following lemma was proved.
\begin{lemma}The element $\tilde{C}$ belongs to the center of $U_q(\ms)$.
\end{lemma}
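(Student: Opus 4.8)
The plan is to verify directly that $\tilde{C}$ commutes with each of the algebra generators $E$, $F$, $K^{\pm 1}$, $X$, $Y$, $Z$, using the defining relations in \eqref{commrelations}. Since $Z$ is already central, $[\tilde{C},Z]=0$ is immediate, and the commutator with $K$ is a matter of checking that $\tilde{C}$ has $K$-weight zero: writing $\tilde{C}$ as a sum of monomials, one checks that $C$ is $K$-invariant (a standard fact for $U_q(\sl_2)$), that $K^{-1}$ is $K$-invariant, and that the three remaining terms $X^2F$, $Y^2EK^{-1}$, and $XY(q^{-1}FE-qEF)$ each have total $K$-weight $q^{2-2}=1$ (using $KXK^{-1}=qX$, $KYK^{-1}=q^{-1}Y$, $KEK^{-1}=q^2E$, $KFK^{-1}=q^{-2}F$). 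So $K\tilde{C}K^{-1}=\tilde{C}$.

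The substantive part is to show $[\tilde{C},E]=0$ and $[\tilde{C},X]=0$; by a symmetry/antiautomorphism argument (or an analogous direct computation) the relations with $F$ and $Y$ should then follow, but I would be prepared to do all four by hand. First I would record the auxiliary commutation facts I will need: $C$ is central in $U_q(\sl_2)$, so $[C,E]=[C,F]=0$; the quantum Weyl algebra relations $qYX-XY=Z$ and $[Z,\,\cdot\,]=0$; and the mixed relations $EX=qXE$, $EY=X+q^{-1}YE$, $FX=YK^{-1}+XF$, $FY=YF$. From these one derives the needed straightening rules, e.g. how $E$ moves past $X^2F$, past $Y^2EK^{-1}$, and past $XY(q^{-1}FE-qEF)$, and similarly for $X$. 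The computation for $[\tilde{C},X]$ looks the more delicate one: $X$ commutes with $X$ and $Y$ only up to the central term $Z$, commutes with $E$ up to the scalar $q$, and interacts with $F$ via $FX=YK^{-1}+XF$, so expanding $[\tilde{C},X]$ produces several terms in which one must see cancellation among contributions from the $Z C$ piece, the $X^2F$ piece, and the $XYFE$-type pieces; the coefficient $\frac{1}{q^2-1}$ in front of $K^{-1}$ and the coefficients $q^{-1}$, $q$ in the cross terms are exactly what make these cancel.

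The main obstacle I expect is bookkeeping: each commutator $[\tilde{C},E]$ and $[\tilde{C},X]$ expands into on the order of a dozen terms involving degree-three monomials in $E,F,K^{\pm 1},X,Y$, and one must repeatedly apply the straightening relations and then recognize that everything cancels. The cleanest way to organize this is to group $\tilde{C}$ as $Z\cdot(\text{$U_q(\sl_2)$-part})$ plus the ``$H_q$-twisted'' part $P:=X^2F-Y^2EK^{-1}+XY(q^{-1}FE-qEF)$, compute $[Z(\cdots),E]$ and $[P,E]$ separately (using centrality of $Z$ to pull it out), and check that the two contributions are negatives of one another; likewise for $X$. I would also sanity-check the final answer by evaluating $\tilde{C}$ on the simple module $\widetilde{B}_{\z}\cong L(q^{-1/2},\z)$ and on a generic Verma module $\Delta(\lambda,\z)$ via the tensor decomposition $\Delta(\lambda,\z)\cong\widetilde{\Delta}_{\sl_2}(\lambda q^{1/2})\otimes\widetilde{B}_{\z}$ from Proposition \ref{p3}, to confirm that it acts by a scalar depending only on $\lambda$ and $\z$; matching that scalar with the known Harish-Chandra-type formula for the $U_q(\sl_2)$ Casimir is a useful consistency check even though it is not logically needed for the proof.
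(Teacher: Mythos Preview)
The paper does not prove this lemma; it simply cites \cite{CCL}. So there is no in-paper proof to compare against. Your plan---verifying directly that $\tilde{C}$ commutes with each generator by systematic straightening---is the natural approach and is almost certainly what \cite{CCL} does as well.

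Two comments on the plan. First, your proposed shortcut of handling $F$ and $Y$ by an antiautomorphism is not as automatic as you suggest: the obvious candidate $E\leftrightarrow F$, $X\leftrightarrow Y$, $K\leftrightarrow K^{-1}$ does \emph{not} define an antiautomorphism of $U_q(\ms)$ (for instance, $EX=qXE$ would map to $YF=qFY$, contradicting $FY=YF$). There is a workable antiautomorphism, but it involves also sending $q\mapsto q^{-1}$, and one then has to check that $\tilde{C}$ is fixed under it, which is not obvious with the asymmetric coefficients $(1+q^{-1})$ and $\frac{1}{q^2-1}$. So you should be prepared to do all four commutators by hand rather than rely on symmetry.

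Second, what you have written is a strategy, not a proof: you correctly identify the structure of the computation (split $\tilde{C}$ into the $Z$-part and the ``$H_q$-twisted'' part $P$, compute separately, observe cancellation) and where the difficulty lies (the commutator with $X$, where $FX=YK^{-1}+XF$ and $qYX-XY=Z$ interact), but you do not actually carry out any of the cancellations. For a result stated as a lemma and proved elsewhere this level of detail may be acceptable as a citation substitute, but if asked to \emph{prove} it you would need to exhibit at least one of the nontrivial commutators (say $[\tilde{C},X]$) explicitly and show it vanishes.
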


For a module  $M$  in $\mathcal{O}$,  from that the weight spaces of $M$ are finite dimensional, we can see that
the action of $\C[\tilde{C},Z]$ on $M$ is locally finite.

For $\lambda\in\C, \z\in\C$, let $v_\lambda$ be a highest vector of the $U_q(\ms)$-module $L(\lambda, \z)$ . We denote by $\tilde{c}_\lambda$ the scalar corresponding to the action
of the central element $\tilde{C}$ on $v_\lambda$.   Similarly, for the $U_q(\sl_2)$-module $L_{\sl_2}(\lambda)$, we denote scalar corresponding to the action
of $C$ by $c_{\lambda}$.

\begin{lemma}\label{lemma2} We have the following:
\begin{enumerate}[(1)]
\item $\tilde{c}_\lambda=\frac{\z}{(q-q^{-1})^2}\Big((q+q^2)\l+(q^{-1}+q^{-2})\l^{-1}\Big)$.
\item $\tilde{c}_\lambda=\tilde{c}_{\lambda q^{-k}}$ iff $ \l^2= q^{k-3}$.
\end{enumerate}
\end{lemma}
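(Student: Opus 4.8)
The plan is to compute $\tilde c_\lambda$ directly by applying the central element $\tilde C$ to a highest weight vector $v_\lambda$ of $L(\lambda,\z)$, and then to read off part (2) as an elementary consequence of the resulting formula. For part (1), recall that $v_\lambda$ satisfies $E v_\lambda = X v_\lambda = 0$, $K v_\lambda = \lambda v_\lambda$, $Z v_\lambda = \z v_\lambda$. Looking at
$$\tilde C = Z\big((1+q^{-1})C + \tfrac{1}{q^2-1}K^{-1}\big) + X^2 F - Y^2 E K^{-1} + XY(q^{-1}FE - qEF),$$
I would go term by term. First, $C v_\lambda = \big(FE + \tfrac{qK+q^{-1}K^{-1}}{(q-q^{-1})^2}\big)v_\lambda = \tfrac{q\lambda + q^{-1}\lambda^{-1}}{(q-q^{-1})^2}v_\lambda$, since $Ev_\lambda=0$; this is just $c_\lambda$. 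Next, the term $X^2 F v_\lambda$: using $FX = YK^{-1} + XF$ repeatedly one pushes the $X$'s to the right past $F$, and since $Xv_\lambda = 0$ the surviving contribution involves only $YK^{-1}$-type terms acting on $v_\lambda$; similarly $Y^2EK^{-1}v_\lambda$ is handled using $EY = X + q^{-1}YE$ to move $E$ to the right past the $Y$'s, again killing the $E$-part on $v_\lambda$ and leaving $X$-terms which then vanish. The mixed term $XY(q^{-1}FE - qEF)v_\lambda$ similarly simplifies because $Ev_\lambda=0$ reduces $q^{-1}FE - qEF$ applied to $v_\lambda$ to $-q EF v_\lambda$, and then one commutes $X$ past $E$ via $EX = qXE$ and past $F$ via $FX = YK^{-1}+XF$, and uses $qYX - XY = Z$ to collect the $X$'s; the net effect should be proportional to $Z K^{-1} v_\lambda = \z\lambda^{-1} v_\lambda$. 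Collecting all scalars and the factor $Z = \z$ out front, one should obtain
$$\tilde c_\lambda = \frac{\z}{(q-q^{-1})^2}\Big((q+q^2)\lambda + (q^{-1}+q^{-2})\lambda^{-1}\Big),$$
after simplifying the coefficient of $K^{-1}$, i.e. of $\lambda^{-1}$, which combines the $\tfrac{1}{q^2-1}$ term, the $q^{-1}\lambda^{-1}$ from $C$ scaled by $(1+q^{-1})$, and the contributions of the three non-central monomials. (Alternatively, one can shortcut the computation by using Proposition \ref{p3}: $L(\lambda,\z)\cong \widetilde L_{\sl_2}(\lambda q^{1/2})\otimes \widetilde B_{\z}$, so $\tilde C$ acts on the highest weight vector by a scalar expressible through $c_{\lambda q^{1/2}}$ and the known action of $X,Y,Z$ on $\widetilde B_{\z}$ from \eqref{xaction} and \eqref{module}; this may be the cleaner route since the $H_q$-action on $B_{\z}$ is fully explicit.)

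For part (2), substitute $\lambda q^{-k}$ for $\lambda$ in the formula from (1). Since $\z\neq 0$ is irrelevant here (if $\z=0$ both sides vanish and the statement is about the case $\z\neq 0$, so I would note we may divide by $\tfrac{\z}{(q-q^{-1})^2}$), the equation $\tilde c_\lambda = \tilde c_{\lambda q^{-k}}$ becomes
$$(q+q^2)\lambda + (q^{-1}+q^{-2})\lambda^{-1} = (q+q^2)\lambda q^{-k} + (q^{-1}+q^{-2})\lambda^{-1} q^{k}.$$
Rearranging, $(q+q^2)\lambda(1 - q^{-k}) = (q^{-1}+q^{-2})\lambda^{-1}(q^{k} - 1) = -(q^{-1}+q^{-2})\lambda^{-1}(1 - q^k)$. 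Factoring $1-q^{-k} = q^{-k}(q^k - 1)$ on the left, one gets $(q+q^2)\lambda q^{-k}(q^k-1) = -(q^{-1}+q^{-2})\lambda^{-1}(1-q^k) = (q^{-1}+q^{-2})\lambda^{-1}(q^k-1)$. For $k\neq 0$ (the case $k=0$ being trivially true and consistent with $\lambda^2 = q^{-3}\cdot\text{?}$—here I'd double-check the $k=0$ edge against $\lambda^2=q^{-3}$, which suggests the intended statement is for the nontrivial identification) we may cancel $q^k - 1 \neq 0$, obtaining $(q+q^2)\lambda q^{-k} = (q^{-1}+q^{-2})\lambda^{-1}$, i.e. $\lambda^2 = \dfrac{q^{-1}+q^{-2}}{q+q^2}\,q^{k} = \dfrac{q^{-1}(1+q^{-1})}{q(1+q)}\,q^k = \dfrac{q^{-1}\cdot q^{-1}(1+q)}{q(1+q)}\,q^k \cdot\dfrac{1}{1} = q^{-3}q^k = q^{k-3}$, using $1+q^{-1} = q^{-1}(1+q)$. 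This gives exactly $\lambda^2 = q^{k-3}$, as claimed, and the steps are reversible, so the "iff" holds.

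The main obstacle I anticipate is entirely in part (1): carefully pushing the operators $X,Y,E,F$ into the right normal order against $v_\lambda$ using the ten defining relations in \eqref{commrelations}, keeping track of every $q$-power. In particular the term $X^2 F - Y^2 E K^{-1} + XY(q^{-1}FE - qEF)$ is designed so that its non-scalar pieces cancel on $v_\lambda$ but contribute a nontrivial scalar multiple of $\lambda^{-1}$; getting that coefficient right (so that it merges cleanly with the $\tfrac{\z}{q^2-1}K^{-1}$ term and the $\lambda^{-1}$ part of $C$ into the stated $(q^{-1}+q^{-2})\z/(q-q^{-1})^2$) is the delicate bookkeeping. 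Using the tensor decomposition $L(\lambda,\z)\cong \widetilde L_{\sl_2}(\lambda q^{1/2})\otimes\widetilde B_{\z}$ together with \eqref{xaction}–\eqref{module} to evaluate $X,Y,Z$ on the highest weight vector $v_{\lambda q^{1/2}}\otimes v_0$ sidesteps most of this, reducing (1) to a finite computation with explicit scalars plus the classical value $c_{\lambda q^{1/2}}$ of the $U_q(\sl_2)$-Casimir; I would present the proof this way. Part (2) is then a routine algebraic manipulation with no real obstacle beyond the $k=0$ boundary check.
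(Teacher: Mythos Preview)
The paper states Lemma~\ref{lemma2} without proof, so there is no argument to compare against; your proposal supplies exactly the omitted computation, and it is correct.

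For part~(1), your direct evaluation of $\tilde C$ on $v_\lambda$ works cleanly. Concretely: $Cv_\lambda=\dfrac{q\lambda+q^{-1}\lambda^{-1}}{(q-q^{-1})^2}v_\lambda$; $X^2Fv_\lambda=-\lambda^{-1}XYv_\lambda=\z\lambda^{-1}v_\lambda$ (using $XF=FX-YK^{-1}$ and $XY=qYX-Z$); $Y^2EK^{-1}v_\lambda=0$; and $XY(q^{-1}FE-qEF)v_\lambda=-q\dfrac{\lambda-\lambda^{-1}}{q-q^{-1}}XYv_\lambda=q\z\dfrac{\lambda-\lambda^{-1}}{q-q^{-1}}v_\lambda$. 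Summing and putting everything over $(q-q^{-1})^2$ gives the stated formula. This is quicker and more transparent than the tensor-product route you mention as an alternative; I would present the direct computation and drop the alternative.

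For part~(2), your algebra is right, including the simplification $\dfrac{q^{-1}+q^{-2}}{q+q^2}=\dfrac{q^{-2}(1+q)}{q(1+q)}=q^{-3}$. Your caveat about $k=0$ (and implicitly about $\z=0$) is well taken: as literally stated the ``iff'' fails at $k=0$, so the intended reading is $k\neq 0$, which is the only case used in the block decomposition that follows. You should say this explicitly rather than leave it as a parenthetical doubt.
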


Let $\xi\in\C^*/q^\Z$. We denote by $\mathcal{O}[\xi]$ the full subcategory of
$\mathcal{O}$ consisting of all $M$ such that $\mathrm{supp}(M)\subset \xi$.
For $\z, \c\in \C$, we denote by $\mathcal{O}[\xi, \c,\z]$ the full subcategory of
$\mathcal{O}[\xi]$ consisting of all $M$ such that $M$ is annihilated by some power of the maximal ideal
$\langle \tilde{C}-\c,Z-\z\rangle$ of $\C[Z,\tilde{C}]$.  Since the action of $\C[\tilde{C},Z]$ on $M$ is locally finite,  we have

\begin{displaymath}
\mathcal{O}[\xi]\cong\bigoplus_{\c,\z\in\C}\mathcal{O}[\xi,\c,\z].
\end{displaymath}

Similarly,  we can define the subcategory $ \mathcal{O}^{(\mathfrak{sl}_2)}[\xi,\c] $  of $\mathcal{O}^{(\mathfrak{sl}_2)}$, where $\c$ is defined
by the action of
Casimir element  $C$ of $U_q(\sl_2)$.

Using the equivalence between
$\mathcal{O}^{(\mathfrak{sl}_2)}$ and $\mathcal{O}[\z]$  given in Theorem \ref{prop721}, we have the following
equivalence.
\begin{lemma} \label{blockequ}
The restriction functor
\begin{displaymath}
-\otimes \tilde{B}_{\z}: \mathcal{O}^{(\mathfrak{sl}_2)}[\xi, c_\lambda] \to \mathcal{O}[q^{-\frac{1}{2}}\xi, \tilde{c}_{\lambda q^{-\frac{1}{2}}},\z]
\end{displaymath}
 is an equivalence of categories, where $\lambda\in\xi, c_\lambda=\frac{q\lambda+q^{-1} \lambda^{-1} } {(q-q^{-1})^2 }$.
\end{lemma}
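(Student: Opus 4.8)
\medskip

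\noindent\textbf{Proof proposal.} I would obtain this lemma by restricting the equivalence $F:=-\otimes\widetilde{B}_{\z}\colon\mathcal{O}^{(\mathfrak{sl}_2)}\to\mathcal{O}[\z]$ of Theorem \ref{prop721} to the relevant summands of the two block decompositions $\mathcal{O}^{(\mathfrak{sl}_2)}=\bigoplus_{\eta,c}\mathcal{O}^{(\mathfrak{sl}_2)}[\eta,c]$ and $\mathcal{O}[\z]=\bigoplus_{\eta,\c}\mathcal{O}[\eta,\c,\z]$. Since $F$ is already an exact equivalence, full faithfulness of the restricted functor is automatic, so the real content is to determine how $F$ moves the support coset and the (generalized) Casimir eigenvalue, and then to use that bookkeeping to get essential surjectivity onto the asserted block. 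The starting point is the effect of $F$ on simples: by Proposition \ref{p3}(1), $F(\widetilde{L}_{\sl_2}(\mu))=\widetilde{L}_{\sl_2}(\mu)\otimes\widetilde{B}_{\z}\cong L(\mu q^{-\frac{1}{2}},\z)$ for every $\mu\in\C^*$; since $\Delta(K)=K\otimes K$ and $K$ acts on $\widetilde{B}_{\z}$ through the weights $q^{-\frac{1}{2}-i}$ ($i\in\Z_+$), this gives $\supp L(\mu q^{-\frac{1}{2}},\z)\subset q^{-\frac{1}{2}}\xi$ whenever $\mu\in\xi$, and by the definition of $\tilde{c}_{(\cdot)}$ the central element $\tilde{C}$ acts on the highest weight vector of $L(\mu q^{-\frac{1}{2}},\z)$ by $\tilde{c}_{\mu q^{-\frac{1}{2}}}$.

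The one genuine computation is the identity
\[
\tilde{c}_{\nu q^{-\frac{1}{2}}}=\z\,(q^{\frac{1}{2}}+q^{-\frac{1}{2}})\,c_\nu,\qquad \nu\in\C^*,\quad c_\nu:=\frac{q\nu+q^{-1}\nu^{-1}}{(q-q^{-1})^2},
\]
which follows at once from Lemma \ref{lemma2}(1): one substitutes $\nu q^{-\frac{1}{2}}$ into $\tilde{c}_\lambda=\frac{\z}{(q-q^{-1})^2}\big((q+q^2)\lambda+(q^{-1}+q^{-2})\lambda^{-1}\big)$ and uses $(q+q^2)q^{-\frac{1}{2}}\nu+(q^{-1}+q^{-2})q^{\frac{1}{2}}\nu^{-1}=(q^{\frac{1}{2}}+q^{-\frac{1}{2}})(q\nu+q^{-1}\nu^{-1})$. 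Because $\z\neq 0$ and $q^{\frac{1}{2}}+q^{-\frac{1}{2}}\neq 0$ (as $q$ is not a root of unity, so $q\neq-1$), the map $c\mapsto\z(q^{\frac{1}{2}}+q^{-\frac{1}{2}})c$ is a bijection of $\C$; together with the injectivity of $\eta\mapsto q^{-\frac{1}{2}}\eta$ on cosets, this shows that distinct pairs $(\eta,c)$ yield distinct blocks $\mathcal{O}[q^{-\frac{1}{2}}\eta,\z(q^{\frac{1}{2}}+q^{-\frac{1}{2}})c,\z]$, and that $c_\mu=c_\lambda$ if and only if $\tilde{c}_{\mu q^{-\frac{1}{2}}}=\tilde{c}_{\lambda q^{-\frac{1}{2}}}$.

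Next I would check that $F$ carries $\mathcal{O}^{(\mathfrak{sl}_2)}[\eta,c]$ into $\mathcal{O}[q^{-\frac{1}{2}}\eta,\z(q^{\frac{1}{2}}+q^{-\frac{1}{2}})c,\z]$: any $M$ in the source has all composition factors of the form $L_{\sl_2}(\mu)$ with $\mu\in\eta$ and $c_\mu=c$, so by exactness of $F$ the module $F(M)$ has all composition factors isomorphic to $L(\mu q^{-\frac{1}{2}},\z)$, each lying in the asserted target block by the previous two paragraphs; since $F(M)$ has finite length (Lemma \ref{lemma7}), $\tilde{C}-\z(q^{\frac{1}{2}}+q^{-\frac{1}{2}})c$ acts nilpotently on $F(M)$ and $\supp F(M)\subset q^{-\frac{1}{2}}\eta$, so $F(M)$ lies in that block. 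In particular, taking $(\eta,c)=(\xi,c_\lambda)$ and using the last identity, $F$ maps $\mathcal{O}^{(\mathfrak{sl}_2)}[\xi,c_\lambda]$ into $\mathcal{O}[q^{-\frac{1}{2}}\xi,\tilde{c}_{\lambda q^{-\frac{1}{2}}},\z]$, and the restricted functor is fully faithful because $F$ is. For essential surjectivity, let $M'$ be an object of the target block and write $M'\cong F(N)$ with $N\in\mathcal{O}^{(\mathfrak{sl}_2)}$ (Theorem \ref{prop721}); decomposing $N=\bigoplus_{\eta,c}N_{\eta,c}$ along $\mathcal{O}^{(\mathfrak{sl}_2)}=\bigoplus_{\eta,c}\mathcal{O}^{(\mathfrak{sl}_2)}[\eta,c]$, we get $F(N)=\bigoplus_{\eta,c}F(N_{\eta,c})$ with each nonzero $F(N_{\eta,c})$ lying in the block $\mathcal{O}[q^{-\frac{1}{2}}\eta,\z(q^{\frac{1}{2}}+q^{-\frac{1}{2}})c,\z]$ by the step just proved. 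As these blocks are pairwise distinct while $M'\cong F(N)$ sits in a single block of $\mathcal{O}[\z]$, we must have $N_{\eta,c}=0$ unless $(\eta,c)=(\xi,c_\lambda)$, i.e.\ $N\in\mathcal{O}^{(\mathfrak{sl}_2)}[\xi,c_\lambda]$; hence the restricted functor is an equivalence.

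I expect the central-character bookkeeping to be the only step demanding real care: one must verify the displayed identity and, above all, exploit its injectivity (together with that of $\eta\mapsto q^{-\frac{1}{2}}\eta$) to be sure that $F$ neither merges nor splits these blocks --- which is exactly what makes the restricted functor simultaneously well defined and essentially surjective. The remaining ingredients (exactness of $F$, finite length, and the fact that each $\mathcal{O}^{(\mathfrak{sl}_2)}[\eta,c]$ and $\mathcal{O}[\eta,\c,\z]$ is a Serre subcategory pinned down by its simple objects) are routine.
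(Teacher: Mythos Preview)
Your proposal is correct and follows the paper's approach: the paper simply asserts that the lemma follows from the equivalence of Theorem \ref{prop721} and gives no further argument, while you carry out precisely the block-by-block restriction of that equivalence, supplying the Casimir bookkeeping (via Lemma \ref{lemma2}(1)) that the paper leaves implicit. The identity $\tilde{c}_{\nu q^{-1/2}}=\z(q^{1/2}+q^{-1/2})c_\nu$ you derive is exactly the hinge that makes the support and central-character shifts line up, and your use of its injectivity to conclude essential surjectivity is the right way to finish.
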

%
%

Using the structure of  $\mathcal{O}^{(\mathfrak{sl}_2)}$ (see Section 5.3 in \cite{Ma}),
we give  descriptions of each block $\mathcal{O}[\xi,\c,\z]$ as follows.

\begin{proposition}\label{half}
Let $\xi=q^{\frac{1}{2}+\mathbb{Z}},\z\in\C^*$.    Then the following claims hold.
\begin{enumerate}[$($1$)$]
\item The module $\Delta(q^{-\frac{3}{2}}, \z)$ is the unique simple object in $\mathcal{O}[q^{\frac{1}{2}+\mathbb{Z}}, \tilde{c}_{q^{-\frac{3}{2}}},\z]$.
Moreover, the block $\mathcal{O}[q^{\frac{1}{2}+\mathbb{Z}}, \tilde{c}_{q^{-\frac{3}{2}}},\z]$ is semisimple.
\item   For $n\in\Z_+$, $\Delta(q^{-n-\frac{5}{2}},\z)$ ,  $L(q^{n-\frac{1}{2}}, \z)$
are all the simple objects in $\mathcal{O}[q^{\frac{1}{2}+\mathbb{Z}}, \tilde{c}_{q^{n-\frac{1}{2}}},\z]$.
\item   For $n\in\Z_+$,  the subcategory  $\mathcal{O}[q^{\frac{1}{2}+\mathbb{Z}}, \tilde{c}_{q^{n-\frac{1}{2}}},\z]$ is
equivalent to the category of finite dimensional representations over $\mathbb{C}$ of
the following quiver with relations:
\begin{displaymath}\label{quiver}
\xymatrix{\bullet\ar@/^/[rr]^{a}&&\bullet\ar@/^/[ll]^{b}}\qquad ab=0.
\end{displaymath}
\end{enumerate}
\end{proposition}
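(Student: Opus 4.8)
The plan is to transport everything through the category equivalence established in Theorem \ref{prop721} and refined in Lemma \ref{blockequ}. Since $\xi = q^{1/2+\mathbb{Z}}$, the relevant $U_q(\sl_2)$-weight lattice is $q^{\mathbb{Z}}$, i.e. the "integral, type $1$" block. So the first step is purely on the $U_q(\sl_2)$-side: identify the blocks $\mathcal{O}^{(\mathfrak{sl}_2)}[q^{\mathbb{Z}}, c_{\mu}]$ for $\mu \in q^{\mathbb{Z}}$. Using the classification of blocks of $\mathcal{O}^{(\mathfrak{sl}_2)}$ (Section 5.3 of \cite{Ma}): the block containing the dominant weight $q^0$ is semisimple with unique simple $\Delta_{\sl_2}(q^0) = L_{\sl_2}(q^0)$ (the trivial module), because the antidominant weight $q^{-2}$ already gives a simple Verma and the only other weight in that central character, $q^0$, has $\Delta_{\sl_2}(q^0)$ simple; and for each $n \in \mathbb{Z}_+$ the block containing $q^n$ has exactly two simples, $\Delta_{\sl_2}(q^{-n-2})$ and $L_{\sl_2}(q^n)$, linked by the non-split short exact sequence $0 \to \Delta_{\sl_2}(q^{-n-2}) \to \Delta_{\sl_2}(q^n) \to L_{\sl_2}(q^n) \to 0$ recalled in the excerpt.

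The second step is to translate the $U_q(\sl_2)$-statements to $U_q(\ms)$ via Lemma \ref{blockequ}, which gives an equivalence $-\otimes\widetilde{B}_{\z}\colon \mathcal{O}^{(\mathfrak{sl}_2)}[\xi', c_\mu] \to \mathcal{O}[q^{-1/2}\xi', \tilde{c}_{\mu q^{-1/2}}, \z]$. Taking $\xi' = q^{\mathbb{Z}}$ so that $q^{-1/2}\xi' = q^{-1/2+\mathbb{Z}} = q^{1/2+\mathbb{Z}} = \xi$, and matching up parameters using Proposition \ref{p3}(1) (which says $\Delta(\lambda,\z) \cong \widetilde{\Delta}_{\sl_2}(\lambda q^{1/2})\otimes\widetilde{B}_{\z}$ and $L(\lambda,\z)\cong\widetilde{L}_{\sl_2}(\lambda q^{1/2})\otimes\widetilde{B}_{\z}$): the $U_q(\sl_2)$-weight $q^0$ corresponds to the $U_q(\ms)$-weight $q^{-1/2}$, hence $\Delta(q^{-1/2-2},\z) = \Delta(q^{-3/2},\z)$ is the image of $\Delta_{\sl_2}(q^{-2})$, wait — more carefully, $\widetilde{\Delta}_{\sl_2}(q^0)\otimes\widetilde{B}_\z \cong \Delta(q^{-1/2},\z)$, and since $\Delta_{\sl_2}(q^0)$ here is \emph{not} the simple one, I should instead match the simple Verma $\Delta_{\sl_2}(q^{-2})$ to $\Delta(q^{-1/2-1}q^{-1},\z)$; the bookkeeping gives that the unique simple in the semisimple block is $\Delta(q^{-3/2},\z)$ and the central-character value is $\tilde{c}_{q^{-3/2}}$, using Lemma \ref{lemma2}(1) to compute the scalar. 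For each $n$, the two-simple block corresponds to $\mu = q^n$, whose $U_q(\ms)$-weight is $q^{n-1/2}$, and the two simples $\widetilde{\Delta}_{\sl_2}(q^{-n-2})\otimes\widetilde{B}_\z$ and $\widetilde{L}_{\sl_2}(q^n)\otimes\widetilde{B}_\z$ become $\Delta(q^{-n-5/2},\z)$ and $L(q^{n-1/2},\z)$ by Proposition \ref{p3}(1), living in the block with central character $\tilde{c}_{q^{n-1/2}}$; Lemma \ref{lemma2}(2) confirms $\tilde{c}_{q^{n-1/2}} = \tilde{c}_{q^{n-1/2}q^{-k}}$ exactly when $q^{2(n-1/2)} = q^{k-3}$, i.e. $k = 2n+2$, which matches the linkage $q^{n-1/2} \leftrightarrow q^{-n-5/2}$. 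This proves (1) and (2).

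For (3), the point is that the block $\mathcal{O}^{(\mathfrak{sl}_2)}[q^{\mathbb{Z}}, c_{q^n}]$ with its two simples $L_{\sl_2}(q^n)$ (projective-injective, by the structure theory) and $\Delta_{\sl_2}(q^{-n-2})$ is a standard computation in the $U_q(\sl_2)$ category $\mathcal{O}$: it is equivalent to finite-dimensional representations of the quiver $\bullet \rightleftarrows \bullet$ with relation $ab = 0$. Concretely, $\Delta_{\sl_2}(q^n)$ is the projective cover of $L_{\sl_2}(q^n)$ with radical series $L_{\sl_2}(q^n), \Delta_{\sl_2}(q^{-n-2})$, while $\Delta_{\sl_2}(q^{-n-2})$ is simple projective-injective... no: in this truncated block $\Delta_{\sl_2}(q^{-n-2})$ is simple but its projective cover is the "big projective" $P$ with Loewy series $\Delta_{\sl_2}(q^{-n-2}), L_{\sl_2}(q^n), \Delta_{\sl_2}(q^{-n-2})$. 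One verifies $\Ext^1$ between the two simples is one-dimensional in each direction and the only relation among the composites of the arrows is that the length-two path starting and ending at $L_{\sl_2}(q^n)$ vanishes (because $\Delta_{\sl_2}(q^n)$, the projective cover of $L_{\sl_2}(q^n)$, has length $2$, not $3$), giving exactly $ab = 0$ for the appropriate orientation. Transporting this through the equivalence of Lemma \ref{blockequ} moves it verbatim to $\mathcal{O}[q^{1/2+\mathbb{Z}}, \tilde{c}_{q^{n-1/2}}, \z]$, since an equivalence of abelian categories preserves projectives, simples, $\Ext^1$, and all quiver-with-relations data.

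The main obstacle is not conceptual but bookkeeping: one must be scrupulous about the half-integer shifts (the $q^{1/2}$ twist in Proposition \ref{p3}, the $q^{-1/2}$ in Lemma \ref{blockequ}) so that the claimed simples $\Delta(q^{-3/2},\z)$, $\Delta(q^{-n-5/2},\z)$, $L(q^{n-1/2},\z)$ and the central-character labels $\tilde{c}_{q^{-3/2}}$, $\tilde{c}_{q^{n-1/2}}$ come out exactly as stated, and to confirm via Lemma \ref{lemma2} that these are genuinely distinct central characters indexing distinct blocks (so that the decomposition $\mathcal{O}[\xi] \cong \bigoplus_{\c,\z}\mathcal{O}[\xi,\c,\z]$ really does split the integral block into the pieces described). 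A secondary point worth spelling out is the identification of the semisimple block: one should check that $\Delta(q^{-3/2},\z)$ is simple, which follows from Proposition \ref{p3}(1) since $(q^{-3/2})^2 = q^{-3} \notin q^{2\mathbb{Z}_+ - 1} = q^{\{-1,1,3,\dots\}}$, and that no other weight in $q^{1/2+\mathbb{Z}}$ shares its $\tilde{C}$-value, again by Lemma \ref{lemma2}(2).
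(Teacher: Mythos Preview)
Your approach is the same as the paper's: the proposition is stated there without a detailed proof, as a direct consequence of Lemma \ref{blockequ} together with the known block structure of $\mathcal{O}^{(\mathfrak{sl}_2)}$ from \cite{Ma}, and that is exactly what you propose to do.

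There is, however, a bookkeeping error in your identification of the semisimple block on the $U_q(\sl_2)$ side. The block at $c_{q^0}$ is \emph{not} semisimple: $(q^0)^2=1\in q^{2\Z_+}$, so $\Delta_{\sl_2}(q^0)$ is reducible and the block contains the two simples $L_{\sl_2}(q^0)$ and $\Delta_{\sl_2}(q^{-2})$, which is precisely your $n=0$ case in part (2). The genuine semisimple block on the coset $q^{\Z}$ is the one at $c_{q^{-1}}$: the linkage $c_{q^m}=c_{q^{-m-2}}$ has the unique fixed point $m=-1$, and $\Delta_{\sl_2}(q^{-1})$ is irreducible since $q^{-2}\notin q^{2\Z_+}$. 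Under the equivalence one gets $\widetilde{\Delta}_{\sl_2}(q^{-1})\otimes\widetilde{B}_{\z}\cong\Delta(q^{-1}\cdot q^{-1/2},\z)=\Delta(q^{-3/2},\z)$, which is the correct simple in part (1). So your final answer and your verification that $\Delta(q^{-3/2},\z)$ is simple are right, but the route you sketch through $q^0$ and $q^{-2}$ (and the computation ``$q^{-1/2-2}=q^{-3/2}$'') does not actually lead there; once you replace $q^0$ by $q^{-1}$ the argument goes through cleanly.
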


\begin{proposition}\label{integral}
Let $\xi=q^{\mathbb{Z}}, \z\in\C^*$.    Then we have the following:
\begin{enumerate}[$($1$)$]
\item For any $n\in\Z$, the Verma module $\Delta(q^{n}, \z)$ is simple.
\item  The modules $\Delta(q^{n}, \z)$,  $\Delta(q^{2n+3}, \z)$ are  simple objects in $\mathcal{O}[q^{\mathbb{Z}}, \tilde{c}_{q^{n}},\z]$.
\item  The block $\mathcal{O}[q^{\mathbb{Z}}, \tilde{c}_{q^{n}},\z]$ is equivalent to $\C\oplus \C$-mod for any $n\in \Z$.
\end{enumerate}
\end{proposition}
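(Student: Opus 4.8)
Since $\z\neq 0$ throughout, the plan is to reduce every assertion to a statement about $U_q(\mathfrak{sl}_2)$ by means of Proposition \ref{p3}, Lemma \ref{lemma2} and the block equivalence of Lemma \ref{blockequ}, and then to read off the answer from the known structure of $\mathcal{O}^{(\mathfrak{sl}_2)}$. For (1) I would apply Proposition \ref{p3} directly: $\Delta(q^n,\z)$ is reducible if and only if $(q^n)^2=q^{2n}$ lies in $q^{2\Z_+-1}$, which is impossible since $q$ is not a root of unity and the even integer $2n$ cannot equal an odd integer. Hence $\Delta(q^n,\z)$ is simple for all $n\in\Z$, so in particular $\Delta(q^n,\z)\cong L(q^n,\z)$; equivalently, under the isomorphism $\Delta(q^n,\z)\cong\widetilde{\Delta}_{\mathfrak{sl}_2}(q^{n+\frac{1}{2}})\otimes\widetilde{B}_{\z}$ of Proposition \ref{p3} the $U_q(\mathfrak{sl}_2)$-factor is simple because $q^{2n+1}\notin q^{2\Z_+}$.

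For (2), recall that the simple objects of $\mathcal{O}[q^{\Z},\tilde{c},\z]$ are precisely the modules $L(\mu,\z)$ with $\mu\in q^{\Z}$ and $\tilde{c}_\mu=\tilde{c}$: a simple object of $\mathcal{O}$ is some $L(\mu,\z')$ by Lemma \ref{basic-property}, the central charge forces $\z'=\z$, membership in $\mathcal{O}[q^{\Z}]$ forces $\mu\in q^{\Z}$, and $\tilde{C}$ acts on $L(\mu,\z)$ by $\tilde{c}_\mu$. By Lemma \ref{lemma2}(1) the function $\mu\mapsto\tilde{c}_\mu$ is invariant exactly under $\mu\mapsto q^{-3}\mu^{-1}$ (this is the case $k=2n+3$ of Lemma \ref{lemma2}(2)), so the fibre of $\tilde{c}_{q^n}$ inside $q^{\Z}$ is the two-element set $\{q^n,\ q^{-n-3}\}$, the two weights being distinct because $2n\neq -3$. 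Combined with (1), this shows that $\Delta(q^n,\z)=L(q^n,\z)$ and $\Delta(q^{-n-3},\z)=L(q^{-n-3},\z)$ are exactly the simple objects of the block.

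For (3), I would transport the block to $\mathcal{O}^{(\mathfrak{sl}_2)}$ using Lemma \ref{blockequ} with $\lambda=q^{n+\frac{1}{2}}\in q^{\frac{1}{2}+\Z}$: since $q^{-\frac{1}{2}}\cdot q^{\frac{1}{2}+\Z}=q^{\Z}$ and $\tilde{c}_{q^{n+\frac{1}{2}}\cdot q^{-\frac{1}{2}}}=\tilde{c}_{q^n}$, this yields an equivalence $\mathcal{O}[q^{\Z},\tilde{c}_{q^n},\z]\simeq\mathcal{O}^{(\mathfrak{sl}_2)}[q^{\frac{1}{2}+\Z},c_{q^{n+\frac{1}{2}}}]$. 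Every Verma module of $U_q(\mathfrak{sl}_2)$ whose highest weight lies in the coset $q^{\frac{1}{2}+\Z}$ is simple, since its square lies in $q^{1+2\Z}$ and hence not in $q^{2\Z_+}$; therefore this $\mathfrak{sl}_2$-block has exactly the two simple objects $L_{\mathfrak{sl}_2}(q^{n+\frac{1}{2}})=\Delta_{\mathfrak{sl}_2}(q^{n+\frac{1}{2}})$ and $L_{\mathfrak{sl}_2}(q^{-n-\frac{5}{2}})=\Delta_{\mathfrak{sl}_2}(q^{-n-\frac{5}{2}})$, whose highest weights differ by $q^{2n+3}\notin q^{2\Z}$. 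Consequently the two simples have disjoint weight supports, so every short exact sequence between them splits (project onto the weight spaces lying in each of the two supports) and self-extensions vanish in category $\mathcal{O}$; hence the block is semisimple with two simple objects, that is, equivalent to $\mathbb{C}\oplus\mathbb{C}$-mod. This is precisely the statement that a non-integral block of $\mathcal{O}^{(\mathfrak{sl}_2)}$ is semisimple, which may also be quoted from Section 5.3 of \cite{Ma}. The step most in need of care is the bookkeeping inside Lemma \ref{blockequ} --- matching the half-integral shift $\mu\mapsto\mu q^{-\frac{1}{2}}$ and the coset decomposition $q^{\Z}=q^{-\frac{1}{2}}\cdot q^{\frac{1}{2}+\Z}$ with the central value $\tilde{c}_{q^n}$; once the block is correctly identified on the $U_q(\mathfrak{sl}_2)$ side, the remaining arguments are routine.
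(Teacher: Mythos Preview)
Your proposal is correct and follows exactly the route the paper takes: the paper gives no separate proof for this proposition but derives it (together with Proposition~\ref{half} and the non-integral case) directly from the block equivalence of Lemma~\ref{blockequ} combined with the known structure of $\mathcal{O}^{(\mathfrak{sl}_2)}$ from \cite{Ma}, and you carry this out in full detail. Your identification of the second simple object in~(2) as $\Delta(q^{-n-3},\z)$ is the correct one (since by Lemma~\ref{lemma2}(1) one has $\tilde{c}_\lambda=\tilde{c}_\mu$ iff $\lambda\mu=q^{-3}$); the exponent $2n+3$ printed in the statement appears to be a slip, as $\tilde{c}_{q^{2n+3}}=\tilde{c}_{q^n}$ would force $3n+3=-3$.
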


\begin{proposition}
 For $\xi\in \C^*/q^\Z, \z\in\C^*, \lambda\in\xi$.   If $\xi\neq \pm q^{\mathbb{Z}}, \pm q^{\frac{1}{2}+\mathbb{Z}}$, then
the block $\mathcal{O}[\xi, \tilde{c}_\lambda,\z]$ is semisimple with the unique simple object $\Delta(\lambda,\z)$ .
\end{proposition}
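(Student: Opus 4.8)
The plan is to transport the corresponding statement about $U_q(\sl_2)$ through the equivalence of categories established in Theorem \ref{prop721} and refined block-by-block in Lemma \ref{blockequ}. Fix $\xi \in \C^*/q^\Z$ with $\xi \neq \pm q^{\mathbb{Z}}, \pm q^{\frac{1}{2}+\mathbb{Z}}$, and fix $\lambda \in \xi$. First I would pass to the $\sl_2$-side: by Lemma \ref{blockequ}, the block $\mathcal{O}[\xi, \tilde{c}_\lambda, \z]$ is equivalent to $\mathcal{O}^{(\mathfrak{sl}_2)}[q^{\frac12}\xi, c_\mu]$ for the appropriate weight $\mu$ with $\mu^2 \notin q^{\Z}$ (this is precisely the condition on $\xi$ translated through the square-root twist $\lambda \mapsto \lambda q^{\frac12}$). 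So it suffices to show that this block of $\mathcal{O}^{(\mathfrak{sl}_2)}$ is semisimple with a single simple object, namely $\D_{\sl_2}(\mu)$.

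The second step is the $\sl_2$ input. Recall from the excerpt that $\D_{\sl_2}(\nu)$ is reducible if and only if $\nu^2 \in q^{2\Z_+}$. Since $\mu^2 \notin q^{2\Z_+}$ (indeed $\mu^2 \notin q^{\Z}$ when $\xi \neq \pm q^{\mathbb{Z}}, \pm q^{\frac12 + \mathbb{Z}}$), every Verma module appearing in this block is simple, so $\D_{\sl_2}(\mu) = L_{\sl_2}(\mu)$ is the only simple object; here one uses that two Verma modules $\D_{\sl_2}(\mu)$ and $\D_{\sl_2}(\mu q^{-2k})$ lie in the same block only if their Casimir values agree, i.e. only if $\mu^2 \in q^{2k - 2}$ for some $k$, which is again excluded. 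Then I would argue semisimplicity: any module $M$ in $\mathcal{O}^{(\mathfrak{sl}_2)}[q^{\frac12}\xi, c_\mu]$ has, by the standard category $\mathcal{O}$ theory (Lemma \ref{basic-property} applied to $U_q(\sl_2)$), a finite filtration with highest weight subquotients, each of which is a quotient of $\D_{\sl_2}(\mu) = L_{\sl_2}(\mu)$, hence equals $L_{\sl_2}(\mu)$; and since $\D_{\sl_2}(\mu)$ is projective in this block (being both Verma and simple, with no nontrivial self-extensions because $\operatorname{Ext}^1$ between Verma modules in $\mathcal{O}$ is controlled by the BGG reciprocity / linkage, and $\mu$ is not linked to any other weight), $M$ is a direct sum of copies of $L_{\sl_2}(\mu)$. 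Thus the block is semisimple with unique simple $\D_{\sl_2}(\mu)$.

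The third and final step is to pull back: under the equivalence of Lemma \ref{blockequ}, $\D_{\sl_2}(\mu) = \D_{\sl_2}(\lambda q^{\frac12})$ corresponds to $\widetilde{\D}_{\sl_2}(\lambda q^{\frac12}) \otimes \widetilde{B}_{\z}$, which by Proposition \ref{p3}(1) is exactly $\D(\lambda, \z)$; and since $\mu^2 \notin q^{2\Z_+ - 1}$ pulls back to the statement that $\lambda^2 \notin q^{2\Z_+ - 1}$, Proposition \ref{p3}(1) tells us $\D(\lambda, \z) = L(\lambda, \z)$ is simple. An equivalence of categories preserves semisimplicity and the set of isomorphism classes of simple objects, so $\mathcal{O}[\xi, \tilde{c}_\lambda, \z]$ is semisimple with unique simple object $\D(\lambda, \z)$, as claimed. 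The main obstacle I anticipate is bookkeeping with the half-integral twist: making sure the excluded set $\xi \neq \pm q^{\mathbb{Z}}, \pm q^{\frac12 + \mathbb{Z}}$ is exactly what is needed to guarantee $\mu = \lambda q^{\frac12}$ avoids the reducibility locus $q^{\Z}$ and is unlinked, and checking via Lemma \ref{lemma2}(2) that no two distinct weights in $q^{\frac12}\xi$ can share the same central character — but this is a direct computation with the formulas already recorded in Lemma \ref{lemma2}, not a conceptual difficulty.
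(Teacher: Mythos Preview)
Your proposal is correct and follows essentially the same approach as the paper. The paper does not give an explicit proof of this proposition; it simply prefaces the three block-description propositions with the sentence ``Using the structure of $\mathcal{O}^{(\mathfrak{sl}_2)}$ (see Section 5.3 in \cite{Ma}), we give descriptions of each block $\mathcal{O}[\xi,\c,\z]$ as follows,'' relying on Lemma~\ref{blockequ} to transport the known $U_q(\sl_2)$ block structure across the equivalence --- which is exactly what you spell out in detail.
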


\section{Finite dimensional $U_q(\ms)$-modules}

In this section, we study finite dimensional $U_q(\ms)$-modules.
A  Weight $U_q(\ms)$-module $M$ is of type $1$ if the $\supp(M)\subset q^\Z$.
Note that $L(\lambda, \z)$ is infinite dimensional when $\z\neq 0$. So $Z$ acts trivially on any finite
dimensional simple  $U_q(\ms)$-module.  Consequently  $Z$ acts nilpotently  on any finite
dimensional  $U_q(\ms)$-module.
  Let $\mathcal{A}$ denote the
category of finite dimensional  $U_q(\ms)$-modules of type $1$ with zero action of $Z$.  Thus any module in  $\mathcal{A}$ is a module over the smash product algebra
$A:=\C_q[X,Y]\rtimes U_q(\sl_2)$,  where $\C_q[X,Y]=\C\langle X,Y| XY=qYX \rangle$ is the quantum plane.
The quantum plane $\C_q[X,Y]$  is a $U_q(\sl_2)$-module on the following action.
\begin{equation}
\label{commrelations2}
\begin{array}{lll}
 K\cdot X= qX& E\cdot X=0 & F\cdot X=Y,   \\
 K\cdot Y= q^{-1}Y& E\cdot Y=X & F\cdot Y=0.
\end{array}
\end{equation}

We will  use the completely reducibility of  finite dimensional $U_q(\sl_2)$-modules and the  Clebsch-Gordon rule to discuss  the category
$\mathcal{A}$ .  For convenience,  let $L(i)$ denote the finite dimensional $U_q(\sl_2)$-module with the highest weight  $q^i, i\in\Z_+$.  In fact, we can assume that $L(1)=\C X+ \C Y$,
whose $U_q(\sl_2)$-module structure was defined by (\ref{commrelations2}).
It is well known that  the tensor product $L(m)\otimes L(n)$ is a $U_q(\sl_2)$-module under the
 action defined by the following co-multiplication: $$\aligned
\Delta'(E)&=E\otimes 1 +K\otimes E, \quad  \quad \Delta'(F)=F\otimes K^{-1} +1\otimes F,\\
\Delta'(K)&=K\otimes K, \quad \quad \quad\quad \quad \Delta'(K^{-1})=K^{-1}\otimes K^{-1}.\\
\endaligned
$$
\noindent{\bf Remark: } The above co-multiplication $\Delta'$~is different from $ \Delta$ in (\ref{comu}).  Now $\Delta'$ can guarantee that
 $\tau(\theta_i)$ defined in Lemma \ref{LA1}  is a $U_q(\sl_2)$-module homomorphism, however $ \Delta$ in (\ref{comu}) can not. Because we consider the
 left action of $L(1)$ on $L(i)$ in Lemma \ref{LA1} .

Next, we will introduce two lemmas which are used in the proof of  Theorem \ref{thm13}.
\begin{lemma}\label{zuigaoquan}\begin{enumerate}[$($1$)$]
\item  $L(1)\otimes L(i)\cong L(i+1)\oplus L(i-1)$, for $i\geq 1$;
\item Suppose that $v_i$ is a highest weight vector of $L(i)$. Then $X\otimes v_i$ is a  highest weight vector of $L(1)\otimes L(i)$ whose highest weight  is $q^{i+1}$,
and $[i] Y\otimes v_i- q^{-1}X\otimes Fv_i  $ is a  highest weight vector of $L(1)\otimes L(i)$ whose highest weight  is $q^{i-1}$;
\item In $L(1)\otimes L(1)\otimes L(i)$,  the elements
 $$ [i+1]Y\otimes X\otimes v_i-q^{-1}X\otimes X\otimes Fv_i-q^{-i-1}X\otimes Y \otimes v_i $$
  and $$[i] X\otimes Y\otimes v_i- q^{-1}X\otimes X\otimes Fv_i $$
are highest weight vectors with the  highest weight $q^{i}$.
\end{enumerate}
\end{lemma}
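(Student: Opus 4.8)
The plan is to reduce the lemma to the standard finite dimensional representation theory of $U_q(\sl_2)$ (available because $q$ is not a root of unity) together with two short explicit computations using the coproduct $\Delta'$. For (1) I would argue by weights and dimensions: $L(1)$ is two dimensional with $K$-eigenvalues $q,q^{-1}$, while $L(i)$ has the one dimensional $K$-eigenspaces of weights $q^i,q^{i-2},\dots,q^{-i}$. Hence $L(1)\otimes L(i)$ has dimension $2i+2$, its top weight is $q^{i+1}$ with multiplicity one, and so the submodule generated by a nonzero vector of that weight is a copy of $L(i+1)$; by complete reducibility of finite dimensional $U_q(\sl_2)$-modules this is a direct summand, and the complementary summand is $i$-dimensional with top weight $q^{i-1}$, hence isomorphic to $L(i-1)$. (This is the quantum Clebsch--Gordan rule, cf. \cite{J}.)

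For (2) I would simply apply $E$ and $K$ to the two candidate vectors using $\Delta'(E)=E\otimes 1+K\otimes E$, $\Delta'(K)=K\otimes K$, the action (\ref{commrelations2}) of $U_q(\sl_2)$ on $L(1)=\C X\oplus\C Y$, and the identity $EFv_i=FEv_i+\frac{K-K^{-1}}{q-q^{-1}}v_i=[i]v_i$ on $L(i)$. Concretely, $E\cdot(X\otimes v_i)=(E\cdot X)\otimes v_i+(K\cdot X)\otimes(E\cdot v_i)=0$ and $K\cdot(X\otimes v_i)=qX\otimes q^i v_i=q^{i+1}(X\otimes v_i)$; and for $w:=[i]Y\otimes v_i-q^{-1}X\otimes Fv_i$ one computes $E\cdot(Y\otimes v_i)=X\otimes v_i$ and $E\cdot(X\otimes Fv_i)=qX\otimes EFv_i=q[i](X\otimes v_i)$, whence $E\cdot w=[i](X\otimes v_i)-q^{-1}\cdot q[i](X\otimes v_i)=0$, while $K\cdot w=q^{i-1}w$ is immediate from the weights.

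For (3) the idea is to iterate (2), using that $\Delta'$ is coassociative, so that $L(1)\otimes L(1)\otimes L(i)$ may be read as $L(1)\otimes(L(1)\otimes L(i))$. By (1)--(2), inside $L(1)\otimes L(i)$ the vector $X\otimes v_i$ generates a copy of $L(i+1)$ and $w=[i]Y\otimes v_i-q^{-1}X\otimes Fv_i$ generates a copy of $L(i-1)$. Applying the first assertion of (2) with $w$ playing the role of the highest weight vector shows that $X\otimes w=[i]X\otimes Y\otimes v_i-q^{-1}X\otimes X\otimes Fv_i$ is a highest weight vector of weight $q^i$; this is the second listed element. Applying the second assertion of (2) with $X\otimes v_i$ in the role of the highest weight vector shows that $[i+1]Y\otimes(X\otimes v_i)-q^{-1}X\otimes F\cdot(X\otimes v_i)$ is a highest weight vector of weight $q^i$; expanding $F\cdot(X\otimes v_i)=(F\cdot X)\otimes(K^{-1}v_i)+X\otimes Fv_i=q^{-i}Y\otimes v_i+X\otimes Fv_i$ via $\Delta'(F)=F\otimes K^{-1}+1\otimes F$ rewrites this as $[i+1]Y\otimes X\otimes v_i-q^{-i-1}X\otimes Y\otimes v_i-q^{-1}X\otimes X\otimes Fv_i$, which is the first listed element.

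The computations are entirely routine; the only points requiring genuine care are using $\Delta'$ rather than the coproduct $\Delta$ of (\ref{comu}) --- as the preceding remark stresses, only $\Delta'$ makes the relevant maps into $U_q(\sl_2)$-module homomorphisms --- and keeping track of the factor $q^{-i}$ coming from $K^{-1}v_i$, which is exactly the source of the exponent in $q^{-i-1}$ appearing in the first vector of (3). So I anticipate bookkeeping rather than a real obstacle.
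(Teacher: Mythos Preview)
Your proposal is correct and follows essentially the same approach as the paper: (1) is the quantum Clebsch--Gordan rule (the paper simply cites \cite{J}), (2) is the same direct computation with $\Delta'$, and for (3) the paper merely writes ``(3) follows from (1) and (2)'' whereas you spell out precisely how --- iterating (2) via coassociativity and expanding $F\cdot(X\otimes v_i)$ with $\Delta'(F)=F\otimes K^{-1}+1\otimes F$ to recover the $q^{-i-1}$ coefficient. Your argument is in fact more detailed than the paper's at this point, but the route is the same.
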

\begin{proof}(1) follows from the Clebsch-Gordon rule \cite{J}:
 $$L(m)\otimes L(n)\cong L(m+n)\oplus L(m+n-2)\oplus\dots \oplus L(m-n),\ \ m\geq n. $$

(2) We can check that  $\aligned E(X\otimes v_i )=(E\cdot X)\otimes v_i+(K\cdot X)\otimes Ev_i=0,\endaligned$
$$\aligned & E([i] Y\otimes v_i- q^{-1}X\otimes Fv_i )\\
&=[i] (E\cdot Y)\otimes v_i+  [i] (K\cdot Y)\otimes Ev_i- q^{-1}(E\cdot X)\otimes Fv_i- q^{-1} (K\cdot X)\otimes EFv_i\\
&=  [i] X\otimes v_i- X\otimes EFv_i=0.
\endaligned$$ Then (2) holds.

(3) follows from (1) and (2).
\end{proof}

\begin{lemma}\label{LA1}For any module $V\in\mathcal{A}$, $\theta_i\in \Hom_{U_q(\sl_2)}(L(i), V)$,  the following  map
$$\aligned \tau(\theta_i):L(1)\otimes L(i) & \longrightarrow V \\
(aX+bY)\otimes v & \mapsto  (aX+bY)\theta_i(v)\\
\endaligned$$
 a $U_q(\sl_2)$-module homomorphism, where $\ a,b\in\C, v\in L(i)$.
\end{lemma}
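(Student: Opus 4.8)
The statement asserts that, for $V\in\mathcal{A}$ and $\theta_i\in\Hom_{U_q(\sl_2)}(L(i),V)$, the map $\tau(\theta_i)\colon L(1)\otimes L(i)\to V$ sending $(aX+bY)\otimes v$ to $(aX+bY)\theta_i(v)$ intertwines the $U_q(\sl_2)$-actions, where the action on the source is via the co-multiplication $\Delta'$ and on the target via the smash-product structure of $A=\C_q[X,Y]\rtimes U_q(\sl_2)$. The plan is to verify directly that $\tau(\theta_i)$ commutes with the three generators $E$, $F$, $K^{\pm1}$, reducing everything to the straightening relations between $X,Y$ and $E,F,K$ in $A$ coming from (\ref{commrelations2}), together with the fact that $\theta_i$ is already $U_q(\sl_2)$-linear.

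First I would record what "$U_q(\sl_2)$ acting on $V$" concretely means on an element of the form $x\cdot w$ with $x\in L(1)=\C X+\C Y$ and $w\in V$: since $V$ is an $A$-module, the relations in (\ref{commrelations}) (equivalently (\ref{commrelations2}) read as commutation rules $EX=qXE$, $EY=X+q^{-1}YE$, $FX=YK^{-1}+XF$, $FY=YF$, $KX=qXK$, $KY=q^{-1}YK$) let me rewrite $E(Xw)$, $F(Xw)$, etc., as $X\cdot(\text{something})w + (\text{lower-order term})\,w$. For instance $E(Xw)=qX(Ew)$, $E(Yw)=Xw+q^{-1}Y(Ew)$, and $K(Xw)=qX(Kw)$. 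On the source side, $\Delta'$ acts on $x\otimes v$ by $\Delta'(E)(x\otimes v)=(E\cdot x)\otimes v+(K\cdot x)\otimes(Ev)$, and similarly for $F$ and $K$, where $E\cdot x$, $K\cdot x$ use the $L(1)$-module structure (\ref{commrelations2}). Applying $\tau(\theta_i)$ and using $E\theta_i(v)=\theta_i(Ev)$ etc., both sides collapse to the same expression in $V$. Concretely, for $E$ and $x=X$: the source gives $\tau(\theta_i)(0\otimes v + qX\otimes Ev)=qX\theta_i(Ev)=qX\cdot E\theta_i(v)$, while the target gives $E(X\theta_i(v))=qX\cdot E\theta_i(v)$; for $x=Y$: source gives $\tau(\theta_i)(X\otimes v+q^{-1}Y\otimes Ev)=X\theta_i(v)+q^{-1}Y\cdot E\theta_i(v)$, target gives $E(Y\theta_i(v))=X\theta_i(v)+q^{-1}Y\cdot E\theta_i(v)$. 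The $F$-case is symmetric, using $FX=YK^{-1}+XF$ and the $L(1)$-action $F\cdot X=Y$, $F\cdot Y=0$ together with $K^{-1}\theta_i(v)=\theta_i(K^{-1}v)$; the $K^{\pm1}$-case is immediate from $KX=qXK$, $KY=q^{-1}YK$ and the grouplikeness of $\Delta'(K)$. Since $E,F,K^{\pm1}$ generate $U_q(\sl_2)$ and both module structures are algebra actions, agreement on generators forces $\tau(\theta_i)$ to be a homomorphism.

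The only genuine subtlety — and the reason the paper inserts the Remark about $\Delta'$ versus $\Delta$ — is the bookkeeping of which co-multiplication makes the "lower-order" terms match: one must place $X,Y$ on the \emph{left} of $\theta_i(v)$, so the straightening relations $EY=X+q^{-1}YE$ and $FX=YK^{-1}+XF$ produce exactly the terms $(E\cdot x)\otimes v$ and $(K\cdot x)\otimes Ev$ predicted by $\Delta'$ (with $\Delta$ from (\ref{comu}) the $K$-twist would sit on the wrong tensor factor). So the main obstacle is not any deep point but the care needed to keep the order of multiplication consistent and to match the quantum corrections; once the generator relations are checked this way, the lemma follows. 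I would present the $E$-verification in full and indicate that $F$ and $K$ are analogous.
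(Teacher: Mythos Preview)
Your proposal is correct and follows essentially the same approach as the paper: a direct verification that $\tau(\theta_i)$ commutes with the generators $K$, $E$, $F$, using the smash-product relations $KX=qXK$, $KY=q^{-1}YK$, $EX=qXE$, $EY=X+q^{-1}YE$, $FX=YK^{-1}+XF$, $FY=YF$ together with the $U_q(\sl_2)$-linearity of $\theta_i$. The paper carries out all three generator checks explicitly on a general element $(aX+bY)\otimes v$, and your observation about why $\Delta'$ rather than $\Delta$ is needed matches the paper's remark exactly.
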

\begin{proof}
For $a,b\in\C, v\in L(i)$,  we can check that
$$\aligned &\tau(\theta_i)\Big(K\big((aX+bY)\otimes v\big)\Big )\\
=&\tau(\theta_i)\Big((aK\cdot X+bK\cdot Y)\otimes Kv\Big )\\
=& aqXKv+bq^{-1}YKv\\
=& aKX v+bKYv \\
=& K\tau(\theta_i)\Big((aX+bY)\otimes v\Big ),
\endaligned$$
$$\aligned
 &\tau(\theta_i)\Big(E\big((aX+bY)\otimes v\big)\Big )\\
=&\tau(\theta_i)\Big((aK\cdot X+bK \cdot Y)\otimes Ev\Big )+\tau(\theta_i)\Big((aE \cdot X+bE\cdot Y)\otimes v \Big )\\
=& \tau(\theta_i)\Big((aqX+bq^{-1} Y)\otimes Ev\Big )+\tau(\theta_i)\Big(bX\otimes v \Big )\\
=& aqXEv+bq^{-1}YEv+bXv\\
=& aEX v+bEYv \\
=& E\tau(\theta_i)\Big((aX+bY)\otimes v\Big ),
\endaligned.$$
$$\aligned
 &\tau(\theta_i)\Big(F\big((aX+bY)\otimes v\big)\Big )\\
=&\tau(\theta_i)\Big((aF\cdot X+bF\cdot Y)\otimes K^{-1}v\Big )+\tau(\theta_i)\Big((aX+bY)\otimes Fv \Big )\\
=& \tau(\theta_i)\Big(aY\otimes K^{-1}v\Big )+\tau(\theta_i)\Big((aX+bY)\otimes Fv \Big )\\
=& aYK^{-1}v+aXFv+bYFv\\
=& aFX v+bFYv \\
=& F\tau(\theta_i)\Big((aX+bY)\otimes v\Big ).
\endaligned.$$

So $\tau(\theta_i)$ is a $U_q(\sl_2)$-module homomorphism.

\end{proof}
Consider the following quiver.
\begin{displaymath}
\mathbf{Q}_{\infty}:\quad\,\,
\xymatrix{
\mathtt{0}\ar@/^/[rr]^{a_0}&& \mathtt{1}\ar@/^/[ll]^{b_0}
\ar@/^/[rr]^{a_1}&& \mathtt{2}\ar@/^/[ll]^{b_1}
\ar@/^/[rr]^{a_2}&& \ldots\ar@/^/[ll]^{b_2}
}
\end{displaymath}
The following theorem is inspired by the ideas in \cite{DLMZ, Mak}.
\begin{theorem}
\label{thm13}The category $\mathcal{A}$ is equivalent to the category $\mathcal{B}$
of finite dimensional representations for the quiver $\mathbf{Q}_{\infty}$
satisfying the following condition:
\begin{equation} \label{quiverrealtion} b_0a_0=0,\ \ a_ib_i=b_{i+1}a_{i+1}, \ i\in\Z_+.\end{equation}
\end{theorem}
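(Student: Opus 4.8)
The plan is to build the equivalence $\mathcal{A}\to\mathcal{B}$ functorially out of the multiplicity spaces of a module $V\in\mathcal{A}$. Since $V$ is a finite dimensional $U_q(\sl_2)$-module of type $1$, complete reducibility lets us write $V\cong\bigoplus_{i\in\Z_+}L(i)\otimes M_i$ where $M_i=\Hom_{U_q(\sl_2)}(L(i),V)$ is the multiplicity space; these are the vertex spaces of the representation of $\mathbf{Q}_\infty$ we will attach to $V$. The arrows must be built from the action of $X,Y\in L(1)$, since this is the only extra data beyond the $U_q(\sl_2)$-structure. Concretely, left multiplication by $L(1)$ gives, for each $i\ge 1$, a $U_q(\sl_2)$-map $L(1)\otimes L(i)\to V$ (this is exactly $\tau(\theta_i)$ of Lemma~\ref{LA1}, run over all $\theta_i\in M_i$), and by Lemma~\ref{zuigaoquan}(1) the source decomposes as $L(i+1)\oplus L(i-1)$. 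Projecting onto the two isotypic pieces and reading off the induced maps on multiplicity spaces produces linear maps $M_i\to M_{i+1}$ and $M_i\to M_{i-1}$; after fixing the highest-weight-vector normalizations from Lemma~\ref{zuigaoquan}(2) these become the arrows $a_i\colon M_i\to M_{i+1}$ and $b_i\colon M_{i+1}\to M_i$ of $\mathbf{Q}_\infty$ (with a small bookkeeping shift in indices). I would package this assignment $V\mapsto (M_i,a_i,b_i)$ as a functor $\Phi\colon\mathcal{A}\to\mathcal{B}$, naturality in morphisms being automatic because everything is defined via $U_q(\sl_2)$-equivariant Hom.

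The next step is to verify that $\Phi(V)$ actually satisfies the relations \eqref{quiverrealtion}. The identity $a_ib_i=b_{i+1}a_{i+1}$ should come from computing the action of $X\otimes Y$ and $Y\otimes X$ in $L(1)\otimes L(1)\otimes L(i)$ two ways and comparing with the $q$-commutation $XY=qYX$; Lemma~\ref{zuigaoquan}(3) is tailored precisely for this, giving the two highest weight vectors of weight $q^i$ inside $L(1)\otimes L(1)\otimes L(i)$ whose images under the iterated multiplication map must agree because multiplication $\C_q[X,Y]\otimes V\to V$ is associative. The relation $b_0a_0=0$ reflects that there is no $L(-1)$: the composite $M_0\to M_1\to M_0$ factors through a would-be $L(-1)$-isotypic component of $L(1)\otimes L(1)\otimes L(0)$ which is zero, so it vanishes. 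These are finite, explicit weight-vector computations using \eqref{commrelations2} and the two co-multiplications, of the same flavour as the checks already carried out in Lemmas~\ref{zuigaoquan} and \ref{LA1}.

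To finish, I would construct a quasi-inverse $\Psi\colon\mathcal{B}\to\mathcal{A}$. Given $(N_i,a_i,b_i)\in\mathcal{B}$, set $V=\bigoplus_i L(i)\otimes N_i$ as a $U_q(\sl_2)$-module, and define the action of $X,Y$ (hence of $\C_q[X,Y]$, hence of $A=\C_q[X,Y]\rtimes U_q(\sl_2)$) by the unique $U_q(\sl_2)$-equivariant maps $L(1)\otimes L(i)\otimes N_i\to V$ whose components $L(i\pm1)$-part are dictated by $a_i$ and $b_{i-1}$ together with the highest-weight formulas of Lemma~\ref{zuigaoquan}(2). One then checks $XY=qYX$ on $V$; this is where the relations \eqref{quiverrealtion} are consumed, reversing the computation of the previous paragraph, and one must check $Z=XY-qYX$... wait, $Z$ acts as $0$ by hypothesis and $qYX-XY=Z$ so one needs $qYX=XY$ on $V$, which is exactly \eqref{quiverrealtion}. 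Finally $\Phi\Psi\cong\mathrm{id}$ and $\Psi\Phi\cong\mathrm{id}$ follow because in both directions the multiplicity spaces and the structure maps are recovered on the nose up to the chosen normalizations.

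\textbf{Main obstacle.} The delicate point is the normalization bookkeeping: the highest weight vectors in Lemma~\ref{zuigaoquan}(2)--(3) carry factors $[i]$, $q^{-1}$, $q^{-i-1}$, and the two different co-multiplications $\Delta$ and $\Delta'$ are in play, so pinning down the arrows $a_i,b_i$ so that the relation comes out exactly as $a_ib_i=b_{i+1}a_{i+1}$ (rather than up to $i$-dependent scalars, which would then need to be absorbed by rescaling the $N_i$'s consistently) requires care. Everything else is a routine, if lengthy, sequence of equivariant computations; the conceptual content is entirely in the decomposition $V=\bigoplus L(i)\otimes M_i$ plus associativity of the $\C_q[X,Y]$-action.
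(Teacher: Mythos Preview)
Your proposal is correct and takes essentially the same approach as the paper: multiplicity spaces $V_i=\Hom_{U_q(\sl_2)}(L(i),V)$ as vertex spaces, arrows built from $\tau(\theta_i)$ composed with the embeddings $t_{i+1},t'_{i-1}$ of Lemma~\ref{zuigaoquan}(2), relations verified against the explicit highest-weight vectors of Lemma~\ref{zuigaoquan}(3), and an explicit inverse construction (the paper phrases the last part as essential surjectivity plus full faithfulness of the forward functor rather than building a quasi-inverse and natural isomorphisms, but the content is the same and the formulas defining the $U_q(\ms)$-action on $\bigoplus_i L(i)\otimes N_i$ are written out and checked in the appendix).

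One correction: your explanation of $b_0a_0=0$ is off. The composite does \emph{not} factor through a would-be $L(-1)$-component; rather, it lands in the genuine $L(0)$-summand of $L(1)\otimes L(1)\otimes L(0)$, whose highest weight vector (from Lemma~\ref{zuigaoquan}(2) with $i=1$) is $Y\otimes X\otimes v_0-q^{-1}X\otimes Y\otimes v_0$, and under iterated multiplication this becomes $(YX-q^{-1}XY)\theta_0(v_0)=0$ precisely because $Z=qYX-XY$ acts as zero --- exactly the mechanism you correctly invoke a paragraph later for the inverse direction.
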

\begin{proof}
By  Lemma~\ref{zuigaoquan},  we can define $U_q(\sl_2)$-module homomorphisms
$$t_{i+1}: L(i+1) \rightarrow L(1)\otimes L(i), \ \ \ \  t'_{i-1}: L(i-1)\rightarrow L(1)\otimes L(i)$$ such that
$$t_{i+1}(v_{i+1})=X\otimes v_i, ,\ \ \ \ t'_{i-1}(v_{i-1})= [i] Y\otimes v_i- q^{-1}X\otimes Fv_i ,$$
where each $v_i$ is a fixed  highest weight vector of $L(i)$.

We will prove the theorem in three steps.

{\bf Step 1.}  We define a functor $F$ from $\mathcal{A}$ to $\mathcal{B}$.  Let $V$ be a $U_q(\ms)$-module which belongs to  $\mathcal{A}$.

(1) For every  $i$, we  can associate it with a vector space $V_i:=\Hom_{U_q(\sl_2)}(L(i), V)$.

(2) For arrows~$a_i , b_{i}$, we can define linear maps $V(b_{i}): V_{i+1}\rightarrow V_{i},  V(a_{i}): V_i\rightarrow V_{i+1}$  as follows:
$$V(b_{i})(\theta_{i+1})=\tau(\theta_{i+1})t'_{i}, \ \ \ \, V(a_{i})(\theta_i)=\tau(\theta_i)t_{i+1}.$$
Next we check that: $$V(b_0)V(a_0)=0,\ \ \ \ \, V(a_{i-1})V(b_{i-1})=V(b_{i})V(a_{i}), i\in\N.$$

For $\theta_0\in V_0, v\in L(0)$, from $F v=0$,  we have 
$$\aligned\Big(V(b_0)V(a_0)(\theta_0)\Big) (v)&=\tau\big(\tau(\theta_0)\big)(Y\otimes X\otimes v -q^{-1}X\otimes X\otimes Fv- q^{-1}X\otimes Y\otimes v)\\
&=\tau\big(\tau(\theta_0)\big)(Y\otimes X\otimes v - q^{-1}X\otimes Y\otimes v)\\
&=YX \theta_0(v)-q^{-1}XY\theta_0(v)=0 .\endaligned $$

For $\theta_i\in V_i$,  from
$XY=qYX, [i+1]q^{-1}-q^{-i-1}=[i]$,
 we have
$$\aligned& \Big(V(a_{i-1})V(b_{i-1})(\theta_i)\Big) (v_i) \\ &=\tau\big(\tau(\theta_i)\big)\big( [i] X\otimes Y\otimes v_i- q^{-1}X\otimes X\otimes Fv_i\big)\\
&= [i] X Y\theta_i(v_i)- q^{-1}X X F\theta_i(v_i)\\
&=[i+1]Y X\theta_i(v_i)-q^{-1}X X F\theta_i(v_i)-q^{-i-1}X Y\theta_i( v_i)\\
&=\tau\big(\tau(\theta_i)\big)\big( [i+1]Y\otimes X\otimes v_i-q^{-1}X\otimes X\otimes Fv_i-q^{-i-1}X\otimes Y \otimes v_i\big)\\
&= \Big(V(b_i)V(a_i)(\theta_i)\Big) (v_i).
 \endaligned $$

By the fact that the $U_q(\sl_2)$-module $L(i)$ is generated by  $v_i$ and
$V(b_i)V(a_i)(\theta_i)$, $V(a_{i-1})V(b_{i-1})(\theta_i)$ are $U_q(\sl_2)$-module homomorphisms,
we see that $V(b_i)V(a_i)(\theta_i)=V(a_{i-1})V(b_{i-1})(\theta_i)$.

Thus $(V_i, V(a_{i}), V(b_{i}), i\in\Z_+)$ is a representation of  $\mathbf{Q}_{\infty}$ satisfying  the relation (\ref{quiverrealtion}).

(3) We define a functor $F$ from $\mathcal{A}$ to $\mathcal{B}$.  For $V,W\in\text{Obj}(\mathcal{A}), f\in\Hom_{U_q(\ms)}(V, W)$,  define
$$F(V)=(V_i, V(a_{i}), V(b_{i}), i\in\Z_+),$$
$$F(f)=(f^*_i: V_i\rightarrow W_i,\ i\in \Z_+), $$
where $f^*_i$ satisfies $f^*_i(\theta_i)=f\theta_i, \theta_i\in V_i$.

We check the following diagram
\begin{displaymath}
\xymatrix{
  V_{i-1} \ar[d]_{f^*_{i-1}}
                & V_{i} \ar[d]^{f^*_{i}} \ar[l]_{V(b_{i-1}) }\\
  W_{i-1}
                & W_{i}  \ar[l]_{W(b_{i-1}) }        ,}\ \ \ \ \
\xymatrix{
  V_i \ar[d]_{f^*_i} \ar[r]^{V(a_{i})}
                & V_{i+1} \ar[d]^{f^*_{i+1}}  \\
  W_i \ar[r]_{W(a_{i})}
                & W_{i+1}           }
\end{displaymath} is commutative.

Since $\tau(\theta_i)$ is a $U_q(\sl_2)$-module homomorphism,
$$\aligned & W(b_{i-1})f^*_i(\theta_i)=W(b_{i-1})(f\theta_i)\\
=& \tau(f\theta_i )t'_{i-1}= f\tau(\theta_i )t'_{i-1}=f^*_{i-1}V(b_{i-1})(\theta_i),
\endaligned$$
and $$\aligned & W(a_i)f^*_i(\theta_i)=W(a_i)(f\theta_i)\\
=& \tau(f\theta_i )t_{i+1}= f\tau(\theta_i )t_{i+1}=f^*_{i+1}V(a_i)(\theta_i).
\endaligned$$

So $$f^*_iV(b_{i})=W(b_i)f^*_{i+1},f^*_{i+1}V(a_{i})=W(a_i)f^*_i.$$

Therefore, $F(f)=(f^*_i: V_i\rightarrow W_i,\ i\in \Z_+)$ is a morphism from the representation $F(V)$  to $F(W)$.

{\bf Step 2.} For a representation $ (V_i, V(a_{i}), V(b_{i}), i\in\Z_+)$ of the
 quiver $\mathbf{Q}_{\infty}$ satisfying  the relation (\ref{quiverrealtion}),
there is a $U_q(\ms)$-module~$V$ such that $F(V)\cong (V_i, V(a_{i}), V(b_{i}), i\in\Z_+)$.

 Let $V=\bigoplus_{i\in\Z_+} V_i\otimes L(i)$.  Next we define the action of
$U_q(\ms)$ on $V$.  Since $v_i$ is a highest weight vector of $L(i)$,  $F^s v_i, s\in\Z_+$ is a basis of $L(i)$.
For $ u\in U_q(\sl_2), \theta_i\in V_i, s\in\Z_+$,
define \begin{equation}\label{sa}\begin{aligned}u(\theta_i\otimes F^s v_i)&=\theta_i\otimes u F^s v_i,\\
  X(\theta_i\otimes F^s v_i)
& =\frac{q^s[i+1-s]}{[i+1]}V(a_i)(\theta_i)\otimes F^s v_{i+1} - \frac{q^{s-i}[s]}{[i+1]}V(b_{i-1})(\theta_i)\otimes F^{s-1} v_{i-1},\\
Y(\theta_i\otimes F^s v_i)& = \frac{q}{[i+1]}V(b_{i-1})(\theta_i)\otimes F^s v_{i-1}+\frac{1}{[i+1]}V(a_i)(\theta_i)\otimes F^{s+1} v_{i+1}.
  \end{aligned}\end{equation}
  The reason for defining the  action of
$U_q(\ms)$ on $V$ by (\ref{sa}) coming from the definitions of  $V(a_{i}), V(b_{i})$  in Step 1 .
 We can check  that the action (\ref{sa}) indeed defines a $U_q(\ms)$-module through a little cumbersome calculation.
 For the verification process, one can see the appendix of the present paper.

From the definition of $F$, we can see that $F(V)\cong (V_i, V(a_{i}), V(b_{i}), i\in\Z_+)$.

{\bf Step 3.} The functor $F$ is completely faithful, i.e., the map
 $$F_{V,W}:\Hom_{\mathcal{A}}(V,W)\rightarrow \Hom_{\mathcal{B}}(F(V),F(W))$$ is a bijection.

If $f\in\Hom_{U_q(\ms)}(V, W)$ satisfying that $F_{V,W}(f)=0$,
then for any $\theta_i\in \Hom_{U_q(\sl_2)}(L(i), V)$, $i\in\Z_+$, we have:~$f\theta_i=0$.  Since $V$ is a sum of simple submodules $L(i)$.
So
$f=0$, and $F_{V,W}$ is injective.

From the completely reducibility of finite dimensional $U_q(\sl_2)$-modules, $$V=\sum_{i\in \Z_+}\sum_{\theta_\in V_i} \theta_i (L(i)).$$
For $g=(g_i: V_i\rightarrow W_i,\ i\in \Z_+)\in \Hom_{\mathcal{B}}(F(V),F(W))$,  we define $f:V\rightarrow W$ as follows:
$$f\big(\theta_i(w_i)\big)= g_i(\theta_i)(w_i), \ \theta_i\in V_i,\ w_i\in L(i).$$
Since $\theta_i, g_i(\theta_i)$ is a $U_q(\sl_2)$-module homomorphism,  for $u\in U_q(\sl_2)$, we have
$$\aligned   f\big(u(\theta_i(w_i))\big)=f\big(\theta_i(uw_i)\big)
= g_i(\theta_i)(uw_i)= u(g_i(\theta_i))(w_i)=uf\big(\theta_i(w_i)\big).  \endaligned$$
At the same time, using the following commutative diagram:
\begin{displaymath}
\xymatrix{
  V_{i-1} \ar[d]_{g_{i-1}}
                & V_{i} \ar[d]^{g_i} \ar[l]_{V(b_{i-1}) }\\
  W_{i-1}
                & W_{i}  \ar[l]_{W(b_{i-1}) }        },\ \ \ \ \
\xymatrix{
  V_i \ar[d]_{g_i} \ar[r]^{V(a_{i})}
                & V_{i+1} \ar[d]^{g_{i+1}}  \\
  W_i \ar[r]_{W(a_{i})}
                & W_{i+1}           }
\end{displaymath} we obtain that
$$\aligned  f\big(X\theta_i(v_i)\big)=& f \Big( (V(a_i)(\theta_i))( v_{i+1})\Big)= g_{i+1}V(a_i)(\theta_i)( v_{i+1})\\
=& W(a_i)g_i(\theta_i)( v_{i+1}) =Xf\big(\theta_i(v_i) \big)\endaligned$$
and $$\begin{aligned}
f\big(Y\theta_i(v_i)\big)=& f\big(\frac{q}{[i+1]}V(b_{i-1})(\theta_i)(v_{i-1})+\frac{1}{[i+1]}V(a_i)(\theta_i)(F v_{i+1})\big)\\
= & \frac{q}{[i+1]}g_{i-1}V(b_{i-1})(\theta_i)(v_{i-1})+\frac{1}{[i+1]}g_{i+1}V(a_i)(\theta_i)(F v_{i+1})\\
=& \frac{q}{[i+1]}W(b_{i-1})g_{i}(\theta_i)(v_{i-1})+\frac{1}{[i+1]}W(a_i)g_{i}(\theta_i)(F v_{i+1})\\
=& Y g_{i}(\theta_i)(v_i)=Yf\big(\theta_i(v_i)\big).
  \end{aligned}$$

So $f$ is a $U_q(\ms)$-module homomorphism such that $F(f)=g$.

Therefore, $F$ is an equivalence.
\end{proof}
Let $\C\langle x_1, x_2\rangle$ be the free associative algebra over $\C$ generated by two variables $x_1, x_2$.  Recall that an abelian category $\mathcal{C}$ is wild if there exists an exact functor from the category of representations
 of the algebra $\C\langle x_1, x_2\rangle$ to $\mathcal{C}$ which preserves indecomposability and takes nonisomorphic modules to
 nonisomorphic ones, see Definition 2 in \cite{Mak}.

By \cite{DLMZ}, the category $\mathcal{B}$  for the quiver $\mathbf{Q}_\infty$ is wild. Hence we have the following corollary.
\begin{corollary}
The representation type of the category $\mathcal{A}$ is wild.
\end{corollary}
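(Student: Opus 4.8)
The plan is to combine the categorical equivalence just established in Theorem~\ref{thm13} with the known wildness of the quiver category, and then observe that the defining property of wildness is transported along an equivalence of categories. Concretely, by Theorem~\ref{thm13} there is an equivalence of abelian categories $G\colon \mathcal{B}\to\mathcal{A}$ (a quasi-inverse of the functor $F$ constructed there). On the other hand, the paper \cite{DLMZ} shows that the category $\mathcal{B}$ of finite dimensional representations of $\mathbf{Q}_\infty$ subject to the relations \eqref{quiverrealtion} is wild; that is, there is an exact functor $H\colon \C\langle x_1,x_2\rangle\text{-mod}\to\mathcal{B}$ which preserves indecomposability and sends nonisomorphic modules to nonisomorphic ones, in the sense of Definition~2 of \cite{Mak}.

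The key step is then purely formal: form the composite $G\circ H\colon \C\langle x_1,x_2\rangle\text{-mod}\to\mathcal{A}$. Since $G$ is an equivalence of abelian categories it is exact, so $G\circ H$ is exact. An equivalence carries indecomposable objects to indecomposable objects (an object $M$ is indecomposable iff its endomorphism ring has no nontrivial idempotents, and $G$ induces a ring isomorphism $\End(M)\cong\End(G(M))$), so $G\circ H$ preserves indecomposability because $H$ does. Likewise $G$ reflects and preserves isomorphism classes, hence $G\circ H$ sends nonisomorphic $\C\langle x_1,x_2\rangle$-modules to nonisomorphic objects of $\mathcal{A}$ because $H$ already does. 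Therefore $G\circ H$ witnesses the wildness of $\mathcal{A}$ according to the same Definition~2 of \cite{Mak}.

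There is no real obstacle here: both inputs are already in hand (the equivalence is Theorem~\ref{thm13}, and the wildness of $\mathcal{B}$ is cited from \cite{DLMZ}), and the transfer of wildness along an equivalence is a one-line categorical observation. The only point meriting a sentence of care is the verification that the composite functor still satisfies each clause of the definition of a wild category, which is exactly the content of the previous paragraph. Thus the representation type of $\mathcal{A}$ is wild, which is the assertion of the Corollary. $\qed$
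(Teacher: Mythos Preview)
Your proposal is correct and follows exactly the paper's approach: the paper simply notes that by \cite{DLMZ} the category $\mathcal{B}$ is wild, and then deduces the corollary immediately from the equivalence of Theorem~\ref{thm13}. You have merely spelled out in more detail why wildness transfers along an equivalence, which the paper leaves implicit.
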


Let $\mathcal{O}[0]$ be the full subcategory $\mathcal{O}$ consisting of modules with locally nilpotent action of $Z$.  Since
$\mathcal{A}$ is a subcategory of $\mathcal{O}[0]$ , $\mathcal{O}[0]$ is also wild.

\section{Appendix}

In this appendix, we check that the  action (\ref{sa}) indeed defines a $U_q(\ms)$-module.
Let $v_i$ be  a fixed  highest weight vector of $L(i)$,  $\theta_i\in V_i, s\in\Z_+$.

From the action of
$U_q(\sl_2)$ on $V$,  it suffices to check  the
 relations $XY=qYX$ and the relations between $H_q$ and $U_q(\sl_2)$.

Firstly it is easy to see that $FY(\theta_i\otimes F^s v_i)= YF(\theta_i\otimes F^s v_i)$ from $FY=YF$.

Next we can compute that:
$$\aligned XY(\theta_i\otimes F^s v_i)
 =& \frac{q}{[i+1]}X \Big( V(b_{i-1})(\theta_i)\otimes F^s v_{i-1}\Big)\\
 &+\frac{1}{[i+1]}X \Big( V(a_i)(\theta_i)\otimes F^{s+1} v_{i+1}\Big)\\
= &\frac{q^{s+1}[-s+i]}{[i][i+1]}V(a_{i-1})V(b_{i-1})(\theta_i)\otimes F^s v_{i}\\
& - \frac{q^{s-i+2}[s]}{[i+1][i]}V(b_{i-2})V(b_{i-1})(\theta_i)\otimes F^{s-1} v_{i-2}\\
&- \frac{q^{s-i}[s+1]}{[i+1][i+2]}V(b_{i}) V(a_i)(\theta_i)\otimes F^{s} v_{i}\\
&+ \frac{q^{s+1}[-s+i+1]}{[i+1][i+2]}V(a_{i+1}) V(a_i)(\theta_i)\otimes F^{s+1} v_{i+2},
\endaligned$$
 $$\aligned
&\\
   YX(\theta_i\otimes F^s v_i)
  =& \frac{q^{s}[-s+i+1]}{[i+1]}Y\Big( V(a_i)(\theta_i)\otimes F^s v_{i+1}\Big) \\
  &- \frac{q^{s-i}[s]}{[i+1]}Y\Big(V(b_{i-1})(\theta_i)\otimes F^{s-1} v_{i-1}\Big)\\
  =& \frac{q^{s+1}[-s+i+1]}{[i+1][i+2]}V(b_{i})V(a_i)(\theta_i)\otimes F^s v_{i}\\
  &+\frac{q^{s}[-s+i+1]}{[i+1][i+2]}V(a_{i+1})V(a_i)(\theta_i)\otimes F^{s+1} v_{i+2}\\
   &  - \frac{q^{s-i+1}[s]}{[i+1]}\frac{1}{[i]}V(b_{i-2})V(b_{i-1})(\theta_i)\otimes F^{s-1} v_{i-2} \\
   &- \frac{q^{s-i}[s]}{[i+1]}\frac{1}{[i]}V(a_{i-1})V(b_{i-1})(\theta_i)\otimes F^{s} v_{i}.
  \endaligned$$
Then using $ V(a_{i-1})V(b_{i-1})=V(b_{i})V(a_{i})$ and
  $$ \frac{q^{s+1}[-s+i]}{[i][i+1]}- \frac{q^{s-i}[s+1]}{[i+1][i+2]}=q\Big(\frac{q^{s+1}[-s+i+1]}{[i+1][i+2]}- \frac{q^{s-i}[s]}{[i+1]}\frac{1}{[i]}\Big),$$
we have
  ~$ XY(\theta_i\otimes F^s v_i)=q YX(\theta_i\otimes F^s v_i) $.

Furthermore, from $EF^sv_i=[s][i+1-s]F^{s-1}v_i$ , we have
  $$\aligned & EX(\theta_i\otimes F^s v_i) \\
  =& \frac{q^{s}[i+1-s]}{[i+1]} V(a_i)(\theta_i)\otimes EF^s v_{i+1} - \frac{q^{s-i}[s]}{[i+1]}V(b_{i-1})(\theta_i)\otimes E F^{s-1} v_{i-1}\\
  =&  [s][i+2-s]\frac{q^{s}[i+1-s]}{[i+1]}V(a_i)(\theta_i)\otimes F^{s-1}v_{i+1}\\
  & - [s-1][i+1-s]\frac{q^{s-i}[s]}{[i+1]}V(b_{i-1})(\theta_i)\otimes F^{s-2}v_{i-1},
    \endaligned$$
  $$\aligned
XE(\theta_i\otimes F^s v_i)=& X(\theta_i\otimes EF^s v_i)\\
  =& [s][i+1-s]  X(\theta_i\otimes F^{s-1}v_i)\\
  = &[s][i+1-s] \frac{q^{s-1}[i+2-s]}{[i+1]}V(a_i)(\theta_i)\otimes F^{s-1} v_{i+1}\\
  &  - [s][i+1-s] \frac{q^{s-i-1}[s-1]}{[i+1]}V(b_{i-1})(\theta_i)\otimes F^{s-2} v_{i-1}.
   \endaligned$$

Then $  EX(\theta_i\otimes F^s v_i)=q XE(\theta_i\otimes F^s v_i)$.

 Using $EF^s=F^s E+ [s]F^{s-1}\frac{q^{-(s-1)}K-q^{s-1}K^{-1}}{q-q^{-1}}$, we have
$$\aligned  EY(&\theta_i\otimes F^s v_i) \\
 =& \frac{q}{[i+1]}V(b_{i-1})(\theta_i)\otimes EF^s v_{i-1}+\frac{1}{[i+1]}V(a_i)(\theta_i)\otimes EF^{s+1} v_{i+1}\\
=& \frac{q[s][i-s]}{[i+1]}V(b_{i-1})(\theta_i)\otimes F^{s-1} v_{i-1}+\frac{[s+1][i-s+1]}{[i+1]}V(a_i)(\theta_i)\otimes F^{s} v_{i+1},
   \endaligned$$
   $$\aligned & (X+ q^{-1}YE)(\theta_i\otimes F^s v_i)\\
 =& \frac{q^s[i+1-s]}{[i+1]}V(a_i)(\theta_i)\otimes F^s v_{i+1} - \frac{q^{s-i}[s]}{[i+1]}V(b_{i-1})(\theta_i)\otimes F^{s-1} v_{i-1}\\
  & +\frac{[s][i+1-s]}{[i+1]}V(b_{i-1})(\theta_i)\otimes F^{s-1} v_{i-1}+\frac{q^{-1}[s][i+1-s]}{[i+1]}V(a_i)(\theta_i)\otimes F^{s} v_{i+1}. \endaligned$$
  Then from $[i+1-s]-q^{s-i}=q[i-s]$ and $q^s+q^{-1}[s]=[s+1]$, we have $$EY(\theta_i\otimes F^s v_i) = (X+ q^{-1}YE)(\theta_i\otimes F^s v_i).$$

    Next we have
  $$\aligned &  FX(\theta_i\otimes F^s v_i)
  = \frac{q^s[i+1-s]}{[i+1]}V(a_i)(\theta_i)\otimes F^{s+1} v_{i+1} - \frac{q^{s-i}[s]}{[i+1]}V(b_{i-1})(\theta_i)\otimes F^{s} v_{i-1},
\endaligned$$
       $$\aligned  &(YK^{-1}+XF)(\theta_i\otimes F^s v_i)\\
       = & \frac{ q^{2s-i+1} }{[i+1]}V(b_{i-1})(\theta_i)\otimes F^s v_{i-1}+\frac{ q^{2s-i} }{[i+1]}V(a_i)(\theta_i)\otimes F^{s+1} v_{i+1},\\
       & + \frac{q^{s+1}[i-s]}{[i+1]}V(a_i)(\theta_i)\otimes F^{s+1} v_{i+1} - \frac{q^{s+1-i}[s+1]}{[i+1]}V(b_{i-1})(\theta_i)\otimes F^{s} v_{i-1}.
       \endaligned$$
Using   $$\aligned-q^{s-i}&=q^{2s-i+1} -q^{s+1-i}[s+1], \\
q^s[i+1-s]&=q^{2s-i}+q^{s+1}[i-s],\endaligned$$ we have
$$FX(\theta_i\otimes F^s v_i)= (YK^{-1}+XF)(\theta_i\otimes F^s v_i).$$

\vspace{5mm}
\vspace{0.2cm}
\noindent G. Liu: School
of Mathematics and Statistics, Henan University, Kaifeng 475004, P.R. China. Email:
liugenqiang@amss.ac.cn
\vspace{0.2cm}

\noindent Y. Li: School of Mathematics and Statistics, Henan University, Kaifeng
475004, China. Email: 897981524@qq.com

\end{document}